\documentclass[11pt]{amsart}

\usepackage[a4paper,margin=3.0cm]{geometry}
\usepackage{amsmath,amssymb,amsthm,mathtools,xcolor}
\usepackage{enumitem}
\usepackage[hidelinks]{hyperref}
\hypersetup{hypertexnames=false}
\allowdisplaybreaks

\newtheorem{theorem}{Theorem}[section]
\newtheorem{proposition}[theorem]{Proposition}
\newtheorem{lemma}[theorem]{Lemma}

\theoremstyle{remark}
\newtheorem{remark}[theorem]{Remark}

\newcommand{\F}{\mathbb F}
\newcommand{\K}{\mathbb K}
\newcommand{\PP}{\mathbb P}
\newcommand{\cS}{\mathcal S}
\newcommand{\cR}{\mathcal R}

\newcommand{\Aut}{\operatorname{Aut}}
\newcommand{\ord}{\operatorname{ord}}
\newcommand{\Sz}{\operatorname{Sz}}
\newcommand{\Ree}{\operatorname{Ree}}
\newcommand{\PGL}{\operatorname{PGL}}
\newcommand{\GL}{\operatorname{GL}}
\newcommand{\PGU}{\operatorname{PGU}}

\title[Explicit Suzuki and Ree invariants]{Explicit invariants of the Suzuki and Ree groups in their function fields}
\author{Marco Timpanella}
\address{Dipartimento di Matematica e Informatica, Universit\`a degli Studi di Perugia, Via Vanvitelli 1, 06123 Perugia, Italy}
\email{marco.timpanella@unipg.it}

\subjclass[2020]{11G20, 14H37, 14H05, 20G40}
\keywords{Suzuki curve, Ree curve, Dickson invariant, function field, quotient curve, Suzuki group, Ree group}

\begin{document}

\begin{abstract}
The Suzuki and Ree curves are two classical Deligne--Lusztig curves.  They are maximal over suitable finite fields and their full automorphism groups are the Suzuki and Ree groups.  In this paper we determine explicit generators of the fixed fields of these full automorphism groups in the corresponding function fields. Motivated by Dickson's classical study of invariants of finite linear
groups, we use the natural projective representations of the two groups.
In the Suzuki case the proof uses restricted Dickson invariants, whereas
in the Ree case it relies on an invariant bilinear form of the
seven-dimensional representation.
\end{abstract}

\maketitle

\section{Introduction}

Let \(\K\) be an algebraically closed field of characteristic \(p>0\), and let
\(F|\K\) be a function field of one variable.  If \(G\) is a finite group of
\(\K\)-automorphisms of \(F\), the fixed field
\[
        F^G=\{u\in F:\ g(u)=u\text{ for every }g\in G\}
\]
is the function field of the quotient curve.  When \(F^G\) is rational, the
quotient is generated by one element.  The problem considered in this paper is
to determine such a generator explicitly, as a rational function in the chosen
coordinates of the original curve.

This is a concrete invariant-theoretic problem.  In its classical form, it goes
back to Dickson's work on the invariants of finite linear groups.  For instance,
if \(G=\PGL(2,q)\) acts on the rational function field \(\K(x)\), then the fixed
field is generated by the Dickson-type invariant
\[
        \frac{(x^{q^2}-x)^{q+1}}{(x^q-x)^{q^2+1}}.
\]
More generally, Dickson determined fundamental invariants for finite general
linear groups in positive characteristic.  These formulas are classical, but
they also show that explicit invariant computations are usually quite rigid:
one has to find a rational expression which is fixed by a large finite group and
then prove that it has the correct degree.

For function fields of positive genus the same question is usually much more
difficult.  Even when the quotient \(F^G\) is known to be rational by general
results, it may be hard to write a generator of \(F^G\) in the given affine
coordinates of the curve.  The difficulty is not only computational.  The
automorphism group may act through non-linear transformations on the affine
coordinates, and a priori there is no reason for the generator of the quotient
to have a simple expression.  For this reason, explicit closed formulas for
fixed fields of large automorphism groups of algebraic curves are rather rare.

Dickson invariants already have a notable history in the study of curves over
finite fields.  A particularly relevant example is the
Dickson--Guralnick--Zieve curve.  It is the plane curve defined by the vanishing
of a quotient of two Moore determinants, which is a Dickson invariant for
\(\PGL(3,q)\), and its full automorphism group is \(\PGL(3,q)\); see
\cite{GKTDGZ}.  The same curve is a basic example of a plane curve which is
Frobenius nonclassical with respect to both \(\F_q\) and \(\F_{q^3}\), within
the theory of multi-Frobenius nonclassical curves developed by Borges
\cite{Borges}; see also \cite{BorgesFukasawa} for its relation with Galois
points.  Its large automorphism group, quotient curves, rational points, and
associated complete arcs show that a Dickson invariant can encode at the same
time group-theoretic, arithmetic, and finite-geometric information.

A recent example in this direction is due to Gatti, Ghiandoni and Korchm\'aros,
who determined an explicit invariant of the full automorphism group
\(\PGU(3,q)\) in the Hermitian function field.  Their construction uses
Dickson invariants of \(\PGL(3,q^2)\) and restricts them to the Hermitian curve.
Thus the Hermitian case gives a model strategy: one starts from a classical
linear invariant in projective space and then restricts it to a special curve
which carries a natural projective representation of its automorphism group.
The resulting generator can then be used as a coordinate on the base of the
Galois covering and can assist in obtaining equations for intermediate
subfields by elimination \cite{GGK}.  This complements the extensive study of
subfields and quotient curves of the Hermitian function field; see, for
instance, \cite{GarciaStichtenothXing}.

The purpose of this paper is to carry out the corresponding computation for
the Suzuki and Ree function fields.  Together with the Hermitian curve, the
Suzuki and Ree curves are the standard Deligne--Lusztig curves associated with
the groups of types \({}^2B_2\) and \({}^2G_2\); see
\cite{DeligneLusztig,Hansen}.  They play a central role in the theory of
algebraic curves with many rational points over finite fields and provide
explicit geometric realizations of two families of finite simple groups.
Recall that a curve \(\mathcal X\) defined over a finite field \(\F_Q\) is
called maximal over \(\F_Q\) if it attains the Hasse--Weil upper bound
\[
        |\mathcal X(\F_Q)|=Q+1+2g\sqrt Q,
\]
where \(g\) is the genus of \(\mathcal X\).  The Suzuki curve is maximal over
\(\F_{q^4}\), while the Ree curve is maximal over \(\F_{q^6}\).  Their full
automorphism groups are the Suzuki group \(\Sz(q)\) and the Ree group
\({}^2G_2(q)\), respectively.

These curves are important not only because of their large automorphism groups,
but also because of their many quotients and subcovers.  A curve covered by a
maximal curve through a covering defined over the base field is again maximal,
and therefore such coverings provide a systematic source of examples of maximal
function fields.  The quotient curves of the Suzuki
curve were studied systematically in \cite{GKT}, while subfields of the Ree
function field and their rational places were investigated in
\cite{CakOzb,CakOzbPoints}.  More recent work on cyclic covers and subcovers of
the Suzuki and Ree curves has produced further maximal curves and new genera in
the spectrum of maximal curves \cite{GiuliettiMontanucciQuoosZini,Skabelund}.
These results also underline that the Suzuki and Ree families are not merely
variants of the Hermitian case; some of them are not Galois covered by the
corresponding Hermitian curve \cite{MontanucciZini}.

A further motivation comes from the construction of covers of higher genus
on which the full automorphism group of the original curve lifts.
The cyclic covers constructed by Skabelund provide significant examples:
their automorphism groups are direct products of lifted copies of the full
Suzuki and Ree groups with the cyclic covering groups
\cite{Skabelund,GiuliettiMontanucciQuoosZini}.  The explicit generators obtained here provide a different way to
construct covers retaining the action of the full group, including
covers of positive $p$-rank; see Remark~\ref{rem:prank}.

Explicitness is essential in these applications.  A generator of a fixed field
gives the quotient map itself and provides a starting point for equations of
intermediate quotients, for the study of ramification and divisors, and for
computations of Riemann--Roch spaces.  These data are also relevant to
algebraic-geometry codes.  Deligne--Lusztig varieties were connected with group
codes already in \cite{Hansen}.  AG codes arising from the Hermitian and Suzuki
curves were studied, for instance, in \cite{KorchmarosNagyTimpanella,Matthews};
codes carrying the Suzuki group as a group of automorphisms were constructed in
\cite{EidHassonKsirPeachey}; and classical, quantum, and convolutional codes
from cyclic extensions of the Suzuki and Ree curves were studied in
\cite{MontanucciTimpanellaZini}.  Further constructions include AG and quantum
codes from a generalization of the Deligne--Lusztig curve of Suzuki type
\cite{TimpanellaGeneralization}, and two-point AG codes from a Skabelund maximal
curve covering the Suzuki curve \cite{LandiTimpanellaVicino}.  Thus an explicit
description of the quotient is useful both for the arithmetic study of the
curves and for constructions based on their symmetries.

The main results of this paper give explicit generators for the fixed fields of
the full Suzuki and Ree groups.  In characteristic \(2\), let
\[
        q_0=2^s,\qquad q=2q_0^2,
\]
and let \(\K(\cS_q)\) be the Suzuki function field.  We prove that
\(\K(\cS_q)^{\Sz(q)}\) is generated by
\[
        t_S=
        \frac{B_S(\theta)^{q-2q_0+1}}{\theta^{q^2}},
        \qquad
        \theta=(x^q+x)^{q-1},
\]
where
\[
        B_S(T)=1+T^q+T^{q+2q_0}+T^{q+2q_0+1}.
\]
In characteristic \(3\), let
\[
        q_0=3^s,\qquad q=3q_0^2,
\]
and let \(\K(\cR_q)\) be the Ree function field.  We prove that
\(\K(\cR_q)^{\Ree(q)}\) is generated by
\[
        t_R=
        \frac{B_R(\vartheta)^{q-3q_0+1}}{\vartheta^{q^3}},
        \qquad
        \vartheta=(x^q-x)^{q-1},
\]
where \[
\begin{aligned}
B_R(T)={}&1+T^{q^2}+T^{q^2+3q_0q}
          -T^{q^2+3q_0q+q}+T^{q^2+3q_0q+2q}\\
&+T^{q^2+3q_0q+2q+3q_0}
 +T^{q^2+3q_0q+2q+3q_0+1}.
\end{aligned}
\]

The two cases are approached through their natural ovoid representations.  For
the Suzuki group, restricting a Dickson quotient from \(\PGL(4,q)\) produces
the candidate, and the homogeneity of one restricted Dickson invariant gives the
transformation formula required for the involution.  The degree of the resulting
function is then equal to \( |\Sz(q)| \), so Artin's theorem identifies the
fixed field.  For the Ree group, a direct restriction of Dickson invariants is
less effective.  Instead, the representation in \(\PP^6\) and the pole orders
of its coordinates lead to the analogous formula.  The key identity for the
Ree involution is proved by using the quadratic form preserved by the
seven-dimensional representation and a sequence of Frobenius pairings.  This
difference between the two proofs reflects the main difficulty of the problem:
the projective representation suggests the shape of an invariant, but the
geometry of the curve determines how it can actually be reduced to a usable
formula.  In both cases the degree of the resulting function is the order of the
corresponding group.

The paper is organized as follows.  Section~2 recalls the preliminaries on
the Suzuki curve, the Ree curve, and Dickson invariants.  Section~3 proves
the formula for the Suzuki invariant.  Section~4 proves the corresponding
formula for the Ree invariant and closes with an application of the two
invariants to the construction of covers with prescribed symmetries.

\section{Preliminaries}

\subsection{Preliminaries on the Suzuki curve}

In this section we recall the standard facts on the Suzuki function field that will be used later; see for instance \cite[Chapter 12]{HKT} and \cite{GKT}.  Here $p=2$.  Let
\[
        q_0=2^s,
        \qquad
        q=2q_0^2,
        \qquad s\geq 1.
\]
The Suzuki curve $\cS_q$ is the nonsingular model of
\[
        y^q+y=x^{q_0}(x^q+x).
\]
We write $\K(\cS_q)=\K(x,y)$.  Its full automorphism group is
\[
        \Aut(\cS_q)\cong\Sz(q),
\]
and
\[
        |\Sz(q)|=q^2(q-1)(q^2+1).
\]
The quotient $\cS_q/\Sz(q)$ is rational.

We use the standard auxiliary functions
\[
        z=x^{2q_0+1}+y^{2q_0},\qquad w=xy^{2q_0}+z^{2q_0}.
\]
They satisfy
\begin{equation}\label{eq:suzuki-basic-relations}
        z^q+z=x^{2q_0}(x^q+x),
        \qquad
        w^q+w=y^{2q_0}(x^q+x),
\end{equation}
and
\[
        y=x^{q_0+1}+z^{q_0},\qquad w=x^{2q_0+2}+xz+z^{2q_0}=y^2+xz;
\]
see \cite[Section~3]{GKT}.

Following the standard projective description of the Suzuki--Tits ovoid, we use the coordinates
\[
        (1:x:z:w)\in\PP^3,\qquad w=xz+x^{2q_0+2}+z^{2q_0}.
\]
More precisely, the Suzuki--Tits ovoid is
\[
        \Gamma=
        \left\{(1:a:c:ac+a^{2q_0+2}+c^{2q_0}):a,c\in\F_q\right\}
        \cup\{(0:0:0:1)\},
\]
and $\Sz(q)$ is realized as the subgroup of $\PGL(4,q)$ preserving $\Gamma$; see \cite[Section~2.1]{EidDuursma}.
In the function field we use the same coordinate functions $(1:x:z:w)$.
The following generators, in these coordinates, are standard; see
\cite[Section~2]{GKT}.  For $a,c\in\F_q$, let
\[
\tau_{a,c}:
\begin{cases}
        x\mapsto x+a,\\
        z\mapsto z+a^{2q_0}x+c,\\
        w\mapsto w+az+(a^{2q_0+1}+c)x+a^{2q_0+2}+ac+c^{2q_0}.
\end{cases}
\]
For $d\in\F_q^*$, let
\[
\psi_d:
\begin{cases}
        x\mapsto dx,\\
        z\mapsto d^{2q_0+1}z,\\
        w\mapsto d^{2q_0+2}w.
\end{cases}
\]
Finally, let
\[
\iota:
\begin{cases}
        x\mapsto z/w,\\
        z\mapsto x/w,\\
        w\mapsto 1/w.
\end{cases}
\]
Equivalently,
\[
        \iota:(1:x:z:w)\mapsto (w:z:x:1).
\]
The transformations $\tau_{a,c}$ and $\psi_d$ generate the stabilizer of
$P_\infty$, and this stabilizer together with $\iota$ generates $\Sz(q)$.

\subsection{Preliminaries on the Ree curve}

We now recall the corresponding facts for the Ree function field; see \cite{Pedersen,Skabelund,EidDuursma}.  Here $p=3$.  Let
\[
        q_0=3^s,
        \qquad
        q=3q_0^2,
        \qquad s\geq 1.
\]
The Ree curve $\cR_q$ is the nonsingular model of
\begin{equation}\label{eq:ree-curve}
        y^q-y=x^{q_0}(x^q-x),\qquad z^q-z=x^{2q_0}(x^q-x).
\end{equation}
We put
\[
        A=x^q-x.
\]
Then
\[
        y^q-y=x^{q_0}A,
        \qquad
        z^q-z=x^{2q_0}A.
\]
The full automorphism group is
\[
        \Aut(\cR_q)\cong\Ree(q)={}^2G_2(q),
\]
and
\[
        |\Ree(q)|=q^3(q-1)(q^3+1).
\]
The quotient $\cR_q/\Ree(q)$ is rational.

We use the following auxiliary functions:
\[
\begin{aligned}
 w_1&=x^{3q_0+1}-y^{3q_0},
& w_2&=xy^{3q_0}-z^{3q_0},
& w_3&=xz^{3q_0}-w_1^{3q_0},\\
 w_4&=xw_2^{q_0}-yw_1^{q_0},
& v&=xw_3^{q_0}-zw_1^{q_0},
& w_5&=yw_3^{q_0}-zw_2^{q_0},\\
 w_6&=v^{3q_0}-w_2^{3q_0}+xw_4^{3q_0},
& w_7&=w_2+v,
& w_8&=w_5^{3q_0}+xw_7^{3q_0},\\
 w_9&=w_4w_2^{q_0}-yw_6^{q_0},
& w_{10}&=zw_6^{q_0}-w_3^{q_0}w_4.
\end{aligned}
\]
For $w_5$ and $w_8$ we use throughout the convention displayed above.  This is
the convention determined by equations (A.57) and (A.44) of
\cite{EidDuursma} and is compatible with the projective coordinates and the
identities used below.  Notice that the earlier recursive formulas (4.9) and
(4.12) in the same reference contain different subscripts for these two
functions.
These functions satisfy the Frobenius-difference identities
\begin{align}\label{eq:ree-frob-differences}
 w_1^q-w_1&=x^{3q_0}A,
& w_2^q-w_2&=y^{3q_0}A,
& w_3^q-w_3&=z^{3q_0}A,\notag\\
 w_6^q-w_6&=w_4^{3q_0}A,
& w_8^q-w_8&=w_7^{3q_0}A.
\end{align}
These identities are recorded explicitly in \cite{Skabelund,EidDuursma}.

The stabilizer of $P_\infty$ in $\Ree(q)$ consists of transformations
\[
\begin{cases}
        x\mapsto ax+b,\\
        y\mapsto a^{q_0+1}y+ab^{q_0}x+c,\\
        z\mapsto a^{2q_0+1}z-a^{q_0+1}b^{q_0}y+ab^{2q_0}x+d,
\end{cases}
\]
where $a\in\F_q^*$ and $b,c,d\in\F_q$.  The group is generated by this stabilizer and by the involution
\[
        \phi(x)=\frac{w_6}{w_8},\qquad \phi(y)=\frac{w_{10}}{w_8},\qquad \phi(z)=\frac{w_9}{w_8}.
\]
The Ree ovoid representation that we use is
\[
        (1:x:w_1:w_2:w_3:w_6:w_8)\in\PP^6.
\]
The seven-dimensional space spanned by these functions is preserved by $\Ree(q)$ and gives the standard projective representation described in \cite[Remark~7.4]{EidDuursma}.  Under $\phi$ the coordinates transform as
\[
\begin{aligned}
        \phi(x)&=\frac{w_6}{w_8},
&       \phi(w_1)&=\frac{w_3}{w_8},
&       \phi(w_2)&=-\frac{w_2}{w_8},\\
        \phi(w_3)&=\frac{w_1}{w_8},
&       \phi(w_6)&=\frac{x}{w_8},
&       \phi(w_8)&=\frac1{w_8}.
\end{aligned}
\]
These formulas belong to the standard explicit description of the Ree automorphism group and its projective coordinates; see \cite{Pedersen,Skabelund,EidDuursma}.

The pole orders at $P_\infty$ of these seven functions are
\begin{equation}\label{eq:ree-pole-orders}
\begin{aligned}
-\ord_{P_\infty}(1)&=0, & -\ord_{P_\infty}(x)&=q^2,\\
-\ord_{P_\infty}(w_1)&=q^2+3q_0q, & -\ord_{P_\infty}(w_2)&=q^2+3q_0q+q,\\
-\ord_{P_\infty}(w_3)&=q^2+3q_0q+2q, & -\ord_{P_\infty}(w_6)&=q^2+3q_0q+2q+3q_0,\\
-\ord_{P_\infty}(w_8)&=q^2+3q_0q+2q+3q_0+1.
\end{aligned}
\end{equation}
These orders are listed in \cite[Table~14]{EidDuursma}.  For later use, set
\begin{equation}\label{eq:ree-E}
\begin{aligned}
E_0&=0, & E_1&=q^2,\\
E_2&=q^2+3q_0q, & E_3&=q^2+3q_0q+q,\\
E_4&=q^2+3q_0q+2q, & E_5&=q^2+3q_0q+2q+3q_0,\\
E_6&=q^2+3q_0q+2q+3q_0+1.
\end{aligned}
\end{equation}
Thus $E_i=-\ord_{P_\infty}(f_i)$ for
\[
(f_0,f_1,f_2,f_3,f_4,f_5,f_6)=(1,x,w_1,w_2,w_3,w_6,w_8).
\]
If $\mu=q-3q_0+1$, then
\[
        \mu E_6=(q-3q_0+1)(q^2+3q_0q+2q+3q_0+1)=q^3+1.
\]
This identity will determine the degree of the Ree invariant in Section~4.

\subsection{Preliminaries on Dickson invariants}
We recall the form of the Dickson invariants used below; see \cite{Dickson} and, for the same determinant notation in the context of function fields, \cite[Section~2]{GGK}.  Let $q$ be a power of the characteristic $p$ of $\K$.  Let $X_0,\ldots,X_r$ be homogeneous coordinates on $\PP^r$.  Consider the Moore matrix
\[
        M_r(X)=
        \begin{pmatrix}
        X_0 & X_0^q & X_0^{q^2} & \cdots & X_0^{q^{r+1}}\\
        X_1 & X_1^q & X_1^{q^2} & \cdots & X_1^{q^{r+1}}\\
        \vdots & \vdots & \vdots & & \vdots\\
        X_r & X_r^q & X_r^{q^2} & \cdots & X_r^{q^{r+1}}
        \end{pmatrix}.
\]
For $0\le i\le r+1$, let $D_i(X)$ be the determinant obtained from $M_r(X)$ by deleting the column with exponent $q^i$.  Dickson proved that $D_{r+1}$ divides $D_i$ in $\K[X_0,\ldots,X_r]$ for $0\le i\le r$ and that the quotients
\[
        C_i=\frac{D_i}{D_{r+1}},\qquad 0\le i\le r,
\]
are invariant under $\GL(r+1,q)$.  Indeed, if $M\in\GL(r+1,q)$, then the entries of $M$ are fixed by the $q$-Frobenius and hence
\[
        D_i(MX)=\det(M)D_i(X).
\]
Thus each ratio $D_i/D_{r+1}$ is unchanged under the action of $\GL(r+1,q)$.  The degree of $C_i$ is
\[
        \deg C_i=q^{r+1}-q^i.
\]
We will use only the following degree-zero rational combinations of these invariants.

For the Suzuki ovoid representation we use $r=3$.  The Moore matrix is
\[
M_3(X)=
\begin{pmatrix}
X_0 & X_0^q & X_0^{q^2} & X_0^{q^3} & X_0^{q^4}\\
X_1 & X_1^q & X_1^{q^2} & X_1^{q^3} & X_1^{q^4}\\
X_2 & X_2^q & X_2^{q^2} & X_2^{q^3} & X_2^{q^4}\\
X_3 & X_3^q & X_3^{q^2} & X_3^{q^3} & X_3^{q^4}
\end{pmatrix}.
\]
The rational function used in the Suzuki case is
\begin{equation}\label{eq:suzuki-dickson-quotient}
        U_S=\frac{C_3^{q+1}}{C_2^q}=
        \frac{D_3^{q+1}}{D_2^qD_4}.
\end{equation}

For the Ree representation, the direct analogue of the Dickson quotient used
in the Suzuki case is, with $r=6$,
\[
        U_R=\frac{C_6^{q+1}}{C_5^q},
        \qquad
        \alpha=(1,x,w_1,w_2,w_3,w_6,w_8),
\]
where $C_5$ and $C_6$ are the Dickson invariants of $\GL(7,q)$.
By \eqref{eq:ree-pole-orders}, the pole orders of the coordinates of
$\alpha$ are strictly increasing.  Hence, in each of
$D_5(\alpha)$, $D_6(\alpha)$ and $D_7(\alpha)$, the determinant
expansion contains a unique term of maximal pole order.  In particular,
these determinants are nonzero.  Thus $U_R$ restricts to a well-defined
$\Ree(q)$-invariant rational function on $\cR_q$ and, by
Theorem~\ref{thm:ree-invariant} below, belongs to $\K(t_R)$.
In contrast with the Suzuki case,
however, we do not obtain a simple reduction of this quotient giving $t_R$
explicitly.

The seven-dimensional Ree representation preserves an additional bilinear
form.  Using the same Frobenius conjugates that occur in the Moore determinants
defining the Dickson invariants, this form gives the identity
\[
        B_R(\vartheta)=
        \frac{\langle\alpha,\alpha^{q^4}\rangle}
             {\langle\alpha,\alpha^{q^3}\rangle}.
\]
It will be proved in \eqref{eq:ree-B-pairing}.  Thus the Ree calculation starts
from the same Dickson framework as the Suzuki calculation, but uses the
additional bilinear structure of the Ree representation to obtain an explicit
generator.

\section{An explicit invariant for the Suzuki group}

Set
\[
        A=x^q+x,
        \qquad
        \theta=A^{q-1}.
\]
Put
\[
        m=q-2q_0+1,\qquad n=q+2q_0+1.
\]
Since $q=2q_0^2$, one has
\[
        mn=q^2+1.
\]
Let
\[
        B_S(T)=1+T^q+T^{q+2q_0}+T^{q+2q_0+1}.
\]
We define
\begin{equation}\label{eq:suzuki-t}
        t_S=\frac{B_S(\theta)^m}{\theta^{q^2}}.
\end{equation}
We prove in this section that
\[
        \K(\cS_q)^{\Sz(q)}=\K(t_S),
\]
and hence $t_S$ is an invariant of the Suzuki group in the Suzuki function field.
We first explain where the formula defining $t_S$ comes from.  Consider the rational function $U_S$ in \eqref{eq:suzuki-dickson-quotient}.  Since $U_S$ is invariant under $\PGL(4,q)$ and $\Sz(q)$ acts through the Suzuki--Tits ovoid representation in $\PP^3$, the restriction
\[
        U_S(1,x,z,w)=\frac{D_3(1,x,z,w)^{q+1}}{D_2(1,x,z,w)^qD_4(1,x,z,w)}
\]
is fixed by $\Sz(q)$.  This function is not the desired generator, but it is a $q$-th power of it.

Indeed, in the Suzuki coordinates $(1:x:z:w)$ the determinants entering $U_S$ are
\begin{align*}
D_4(1,x,z,w)&=
\det\begin{pmatrix}
1&1&1&1\\
x&x^q&x^{q^2}&x^{q^3}\\
z&z^q&z^{q^2}&z^{q^3}\\
w&w^q&w^{q^2}&w^{q^3}
\end{pmatrix},\\[1ex]
D_3(1,x,z,w)&=
\det\begin{pmatrix}
1&1&1&1\\
x&x^q&x^{q^2}&x^{q^4}\\
z&z^q&z^{q^2}&z^{q^4}\\
w&w^q&w^{q^2}&w^{q^4}
\end{pmatrix},\\[1ex]
D_2(1,x,z,w)&=
\det\begin{pmatrix}
1&1&1&1\\
x&x^q&x^{q^3}&x^{q^4}\\
z&z^q&z^{q^3}&z^{q^4}\\
w&w^q&w^{q^3}&w^{q^4}
\end{pmatrix}.
\end{align*}
Put
\[
        R=(1,x^{2q_0},y^{2q_0}).
\]
By \eqref{eq:suzuki-basic-relations},
\[
        (x,z,w)^q+(x,z,w)=AR,
\]
and therefore, for every $h\geq1$,
\begin{equation}\label{eq:suzuki-iteration}
        (x,z,w)^{q^h}+(x,z,w)
        =\sum_{j=0}^{h-1}A^{q^j}R^{q^j}.
\end{equation}
For $0\leq i<j<k$, set
\[
\Delta_{ijk}:=
\det\begin{pmatrix}
1&1&1\\
x^{q^i}&x^{q^j}&x^{q^k}\\
y^{q^i}&y^{q^j}&y^{q^k}
\end{pmatrix}.
\]
Subtracting the first column from the remaining columns and using
\eqref{eq:suzuki-iteration}, elementary column operations give
\begin{equation}\label{eq:suzuki-D-reduction}
\begin{aligned}
D_4(1,x,z,w)&=A^{1+q+q^2}\Delta_{012}^{2q_0},\\
D_3(1,x,z,w)&=A^{1+q+q^2}\Delta_{012}^{2q_0}+A^{1+q+q^3}\Delta_{013}^{2q_0},\\
D_2(1,x,z,w)&=A^{1+q+q^3}\Delta_{013}^{2q_0}+A^{1+q^2+q^3}\Delta_{023}^{2q_0}.
\end{aligned}
\end{equation}
To evaluate the three determinants, we use, for $h\geq1$,
\begin{equation}\label{eq:suzuki-xy-iterations}
x^{q^h}+x=\sum_{j=0}^{h-1}A^{q^j},\qquad y^{q^h}+y=\sum_{j=0}^{h-1}x^{q_0q^j}A^{q^j}.
\end{equation}
For $\Delta_{012}$, subtracting the first column from the other two gives
\[
\begin{aligned}
\Delta_{012}
&=\det\begin{pmatrix}
1&0&0\\
x&A&x^{q^2}+x\\
y&x^{q_0}A&y^{q^2}+y
\end{pmatrix}\\
&=A(y^{q^2}+y)+(x^{q^2}+x)(y^q+y)\\
&=A\bigl(x^{q_0}A+x^{q_0q}A^q\bigr)+(A+A^q)x^{q_0}A\\
&=A^{q+1}\bigl(x^{q_0q}+x^{q_0}\bigr)=A^{q+q_0+1}.
\end{aligned}
\]
The same procedure for the exponents $0,1,3$ gives
\[
\begin{aligned}
\Delta_{013}
&=A(y^{q^3}+y)+(x^{q^3}+x)(y^q+y)\\
&=A\bigl(x^{q_0}A+x^{q_0q}A^q+x^{q_0q^2}A^{q^2}\bigr)
 +(A+A^q+A^{q^2})x^{q_0}A\\
&=A^{q+1}\bigl(x^{q_0q}+x^{q_0}\bigr)
 +A^{q^2+1}\bigl(x^{q_0q^2}+x^{q_0}\bigr)\\
&=A^{q+q_0+1}+A^{q^2+q_0+1}+A^{q^2+qq_0+1}.
\end{aligned}
\]
Finally,
\[
\begin{aligned}
\Delta_{023}
&=(x^{q^2}+x)(y^{q^3}+y)+(x^{q^3}+x)(y^{q^2}+y)\\
&=(A+A^q)x^{q_0q^2}A^{q^2}+A^{q^2}\bigl(x^{q_0}A+x^{q_0q}A^q\bigr)\\
&=A^{q^2+1}\bigl(x^{q_0q^2}+x^{q_0}\bigr)
 +A^{q^2+q}\bigl(x^{q_0q^2}+x^{q_0q}\bigr)\\
&=A^{q^2+q_0+1}+A^{q^2+qq_0+1}+A^{q^2+qq_0+q}.
\end{aligned}
\]
Since $\theta=A^{q-1}$, these formulas imply
\[
\frac{\Delta_{013}}{\Delta_{012}}=1+\theta^q+\theta^{q+q_0},\qquad
\frac{\Delta_{023}}{\Delta_{012}}=\theta^q+\theta^{q+q_0}+\theta^{q+q_0+1}.
\]
Substitution into \eqref{eq:suzuki-D-reduction} gives
\[
\begin{aligned}
C_3(1,x,z,w)
&=1+\theta^{q^2}\left(\frac{\Delta_{013}}{\Delta_{012}}\right)^{2q_0}\\
&=1+\theta^{q^2}\left(1+\theta^q+\theta^{q+q_0}\right)^{2q_0}\\
&=1+\theta^{q^2}+\theta^{q^2+2q_0q}+\theta^{q^2+2q_0q+q}=B_S(\theta)^q.
\end{aligned}
\]
Similarly,
\[
\begin{aligned}
C_2(1,x,z,w)
&=\theta^{q^2}\left(\frac{\Delta_{013}}{\Delta_{012}}\right)^{2q_0}
 +\theta^{q^2+q}\left(\frac{\Delta_{023}}{\Delta_{012}}\right)^{2q_0}\\
&=\theta^{q^2}\left(1+\theta^q+\theta^{q+q_0}\right)^{2q_0}
 +\theta^{q^2+q}\left(\theta^q+\theta^{q+q_0}+\theta^{q+q_0+1}\right)^{2q_0}\\
&=\theta^{q^2}\left(1+\theta^q+\theta^{q+2q_0}+\theta^{q+2q_0+1}\right)^{2q_0}
 =\theta^{q^2}B_S(\theta)^{2q_0}.
\end{aligned}
\]
Thus
\begin{equation}\label{eq:suzuki-C-values}
C_3(1,x,z,w)=B_S(\theta)^q,\qquad C_2(1,x,z,w)=\theta^{q^2}B_S(\theta)^{2q_0}.
\end{equation}
Consequently,
\[
U_S(1,x,z,w)=\frac{C_3^{q+1}}{C_2^q}
=\frac{B_S(\theta)^{q(q+1)}}{\theta^{q^3}B_S(\theta)^{2q_0q}}
=\left(\frac{B_S(\theta)^{q-2q_0+1}}{\theta^{q^2}}\right)^q=t_S^q.
\]
This calculation explains the formula for $t_S$.

We now prove directly that $t_S$ is fixed by the generators of $\Sz(q)$.

\begin{lemma}\label{lem:suzuki-stabilizer}
The function $t_S$ is fixed by the stabilizer of $P_\infty$ in $\Sz(q)$.
\end{lemma}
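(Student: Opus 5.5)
Since $t_S$ is a rational function of $\theta=A^{q-1}$ alone, with $A=x^q+x$, I will show that $\theta$ itself is fixed by the stabilizer of $P_\infty$; the lemma then follows at once. By the preliminaries, this stabilizer is generated by the maps $\tau_{a,c}$ ($a,c\in\F_q$) and $\psi_d$ ($d\in\F_q^*$). It therefore suffices to compute the image of $A$ under each generator. Only the action on $x$ is needed, because $A$ depends only on $x$.

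For $\tau_{a,c}$ we have $x\mapsto x+a$ with $a\in\F_q$, so $a^q=a$. In characteristic $2$ this gives
\[
\tau_{a,c}(A)=(x+a)^q+(x+a)=x^q+a+x+a=A .
\]
Hence $\theta$ and $t_S$ are fixed by every $\tau_{a,c}$.

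For $\psi_d$ we have $x\mapsto dx$ with $d\in\F_q^*$, so $d^q=d$. Then
\[
\psi_d(A)=d^qx^q+dx=dA,\qquad \psi_d(\theta)=d^{q-1}\theta=\theta,
\]
since $d^{q-1}=1$. So $\theta$, and with it $B_S(\theta)$ and $t_S$, is fixed by every $\psi_d$.

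The only point needing care is that the action of $\tau_{a,c}$ and $\psi_d$ on $x$ is as stated in the preliminaries, which we take from the cited source, and that these maps generate the whole stabilizer. Given that, no computation involving $B_S$ is required. The real difficulty is deferred to the involution $\iota$, which does not preserve $\theta$.
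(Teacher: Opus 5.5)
Your proposal is correct and follows the paper's proof essentially verbatim. Both arguments show that $A=x^q+x$ is fixed by each $\tau_{a,c}$ and multiplied by $d$ under each $\psi_d$, so that $\theta=A^{q-1}$, and hence the rational function $t_S$ of $\theta$, is invariant under the generators of the stabilizer of $P_\infty$.
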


\begin{proof}
For $\tau_{a,c}$ with $a,c\in\F_q$, we have
\[
        (x+a)^q+(x+a)=x^q+x=A.
\]
Thus $A$, $\theta$, and $t_S$ are fixed by $\tau_{a,c}$.  For $\psi_d$ with $d\in\F_q^*$,
\[
        (dx)^q+dx=dA.
\]
Therefore
\[
        \theta\mapsto (dA)^{q-1}=d^{q-1}\theta=\theta,
\]
and $t_S$ is fixed by $\psi_d$.
\end{proof}

\begin{lemma}\label{lem:suzuki-theta-iota}
Under the involution $\iota$ one has
\[
        \iota(A)=\frac{A}{w^m},
        \qquad
        \iota(\theta)=\frac{\theta}{\lambda^m},
\]
where $m=q-2q_0+1$ and $\lambda=w^{q-1}$.
\end{lemma}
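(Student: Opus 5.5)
The plan is a direct computation from the definition of $\iota$ and the relations \eqref{eq:suzuki-basic-relations}; no divisor or pole-order argument should be needed. Since $\iota(x)=z/w$, we have
\[
\iota(A)=\Bigl(\frac zw\Bigr)^q+\frac zw=\frac{z^qw+zw^q}{w^{q+1}}.
\]
Because $q+1-m=2q_0$, the claim $\iota(A)=A/w^m$ is equivalent to the polynomial identity
\[
z^qw+zw^q=A\,w^{2q_0}
\]
in $\K(\cS_q)$. The first step is to establish this identity.

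To do so, I add and subtract $zw$ so that Frobenius differences appear:
\[
z^qw+zw^q=w(z^q+z)+z(w^q+w).
\]
By \eqref{eq:suzuki-basic-relations} this equals $A\bigl(x^{2q_0}w+zy^{2q_0}\bigr)$. So it remains to check
\[
x^{2q_0}w+zy^{2q_0}=w^{2q_0}.
\]

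For this I use the two expressions $w=xy^{2q_0}+z^{2q_0}$ and $z=x^{2q_0+1}+y^{2q_0}$ on the left, and $w=y^2+xz$ on the right. On the right, since the characteristic is $2$ and $2q_0$ is a power of $2$, we get $w^{2q_0}=y^{4q_0}+x^{2q_0}z^{2q_0}$. On the left, the two terms expand to
\[
x^{2q_0+1}y^{2q_0}+x^{2q_0}z^{2q_0}
\qquad\text{and}\qquad
x^{2q_0+1}y^{2q_0}+y^{4q_0}.
\]
The cross terms $x^{2q_0+1}y^{2q_0}$ cancel in characteristic $2$, leaving exactly $y^{4q_0}+x^{2q_0}z^{2q_0}$, so the two sides agree.

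The formula for $\theta$ then follows at once, since $\iota$ is a field automorphism:
\[
\iota(\theta)=\iota(A)^{q-1}=\frac{A^{q-1}}{w^{m(q-1)}}=\frac{\theta}{\lambda^m}.
\]

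The only step that needs any insight is splitting $z^qw+zw^q$ so that the known differences $z^q+z$ and $w^q+w$ appear. The rest is bookkeeping in characteristic $2$, choosing for each occurrence of $w$ whichever of its two equivalent expressions makes the cancellation visible.
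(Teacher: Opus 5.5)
Your proposal is correct and follows essentially the same route as the paper: you write $z^qw+zw^q=w(z^q+z)+z(w^q+w)=A(x^{2q_0}w+zy^{2q_0})$, then verify $x^{2q_0}w+zy^{2q_0}=w^{2q_0}$ using $w=xy^{2q_0}+z^{2q_0}$, $z=x^{2q_0+1}+y^{2q_0}$ and $w=y^2+xz$, and finally take $(q-1)$-st powers. The only cosmetic difference is in the second identity: the paper groups the terms as $(x^{2q_0+1}+z)y^{2q_0}$ and recognizes this factor as $y^{2q_0}$, whereas you expand $z$ explicitly and cancel the cross terms.
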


\begin{proof}
Since $\iota(x)=z/w$,
\[
        \iota(A)=\left(\frac zw\right)^q+\frac zw
        =\frac{z^qw+zw^q}{w^{q+1}}.
\]
Using \eqref{eq:suzuki-basic-relations},
\[
\begin{aligned}
        z^qw+zw^q
        &=(z+x^{2q_0}A)w+z(w+y^{2q_0}A)\\
        &=A(x^{2q_0}w+zy^{2q_0}).
\end{aligned}
\]
Now
\[
\begin{aligned}
        x^{2q_0}w+zy^{2q_0}
        &=x^{2q_0}(xy^{2q_0}+z^{2q_0})+zy^{2q_0}\\
        &=(x^{2q_0+1}+z)y^{2q_0}+x^{2q_0}z^{2q_0}\\
        &=y^{4q_0}+x^{2q_0}z^{2q_0}
        =w^{2q_0},
\end{aligned}
\]
where the last equality follows from $w=y^2+xz$.  Therefore
\[
        z^qw+zw^q=Aw^{2q_0},
\]
and
\[
        \iota(A)=\frac{Aw^{2q_0}}{w^{q+1}}=\frac{A}{w^{q-2q_0+1}}=\frac{A}{w^m}.
\]
Taking $(q-1)$-st powers gives
\[
        \iota(\theta)=\iota(A)^{q-1}=\frac{A^{q-1}}{w^{m(q-1)}}=\frac{\theta}{\lambda^m}.
\]
\end{proof}

\begin{lemma}\label{lem:suzuki-key}
In $\K(\cS_q)$ one has
\begin{equation}\label{eq:suzuki-key}
        B_S\left(\frac{\theta}{\lambda^m}\right)=\frac{B_S(\theta)}{\lambda^{q^2}},
\end{equation}
where $m=q-2q_0+1$ and $\lambda=w^{q-1}$.
\end{lemma}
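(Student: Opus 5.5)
The plan is to deduce \eqref{eq:suzuki-key} from the homogeneity of the restricted Dickson invariant $C_3$ together with the identity $C_3(1,x,z,w)=B_S(\theta)^q$ in \eqref{eq:suzuki-C-values}. This avoids expanding $B_S(\theta/\lambda^m)$ directly, which would require many relations among $x,z,w$. We regard $\iota$ as a $\K$-automorphism of $\K(\cS_q)$. It therefore commutes with evaluation of polynomials with coefficients in $\F_q$ and with the Frobenius power maps $u\mapsto u^{q^i}$.

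First, $\iota$ acts on the coordinate vector as
\[
\iota(1,x,z,w)=\frac{1}{w}\,(w,z,x,1)=\frac1w\,P\,(1,x,z,w)^{T},
\]
where $P$ is the permutation matrix exchanging the first and fourth coordinates and the second and third. Hence
\[
\iota\bigl(D_i(1,x,z,w)\bigr)=D_i\bigl(w^{-1}P(1,x,z,w)\bigr).
\]
Next we record two scaling properties of the Moore determinants.

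\emph{Scaling by a function.} For any nonzero $f\in\K(\cS_q)$, row $k$ of the Moore matrix of $fX$ is row $k$ of the Moore matrix of $X$ with its column of exponent $q^e$ multiplied by $f^{q^e}$. So $D_i(fX)=f^{\sigma_i}D_i(X)$, where $\sigma_i$ is the sum of the exponents $q^e$ of the columns kept. For $D_3$ these are $1,q,q^2,q^4$, and for $D_4$ they are $1,q,q^2,q^3$. Thus
\[
C_3(fX)=f^{q^4-q^3}\,C_3(X).
\]

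\emph{Action of $P$.} Since $P\in\GL(4,q)$ (indeed $P\in\GL(4,2)$), we have $D_i(PX)=\det(P)D_i(X)$ and hence $C_3(PX)=C_3(X)$.

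These manipulations are legitimate only if the denominator $D_4(1,x,z,w)$ is nonzero. This follows from \eqref{eq:suzuki-D-reduction} and $\Delta_{012}=A^{q+q_0+1}\neq 0$.

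Combining the two properties with $f=w^{-1}$ gives
\[
\iota\bigl(C_3(1,x,z,w)\bigr)=w^{-(q^4-q^3)}\,C_3(1,x,z,w)=\frac{C_3(1,x,z,w)}{\lambda^{q^3}},
\]
since $w^{q^3(q-1)}=\lambda^{q^3}$. By \eqref{eq:suzuki-C-values} the left-hand side equals $\iota(B_S(\theta))^q=B_S(\iota(\theta))^q$, and the right-hand side equals $B_S(\theta)^q/\lambda^{q^3}$. By Lemma~\ref{lem:suzuki-theta-iota}, $\iota(\theta)=\theta/\lambda^m$, so
\[
B_S\!\left(\frac{\theta}{\lambda^m}\right)^{q}=\left(\frac{B_S(\theta)}{\lambda^{q^2}}\right)^{q}.
\]
The $q$-Frobenius is injective on the field $\K(\cS_q)$, so we may extract $q$-th roots, which yields \eqref{eq:suzuki-key}.

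The step needing the most care is the bookkeeping of the scalar factor. Because $w$ is not in $\F_q$, the scalar does not pass through the Moore matrix as a single power; each column picks up its own Frobenius twist, and this is exactly what produces the exponent $q^4-q^3=q^3(q-1)$ instead of $\deg C_3$ counted naively. If one preferred to avoid the Dickson framework, the fallback would be a direct expansion of $B_S(\theta/\lambda^m)\lambda^{q^2}$ using Lemma~\ref{lem:suzuki-theta-iota} and the relations \eqref{eq:suzuki-basic-relations}. That route is considerably messier, and the homogeneity argument above is the one to pursue.
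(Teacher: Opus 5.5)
Your proof is correct and follows the paper's argument: invariance of $C_3$ under the permutation matrix representing $\iota$, the scaling $C_3(fX)=f^{q^3(q-1)}C_3(X)$, the identity $C_3(1,x,z,w)=B_S(\theta)^q$ combined with Lemma~\ref{lem:suzuki-theta-iota}, and taking $q$-th roots via injectivity of Frobenius. One small correction to your closing commentary: $q^4-q^3$ is exactly $\deg C_3$, because $C_3$ is a homogeneous polynomial, so ordinary homogeneity already gives the scaling (as the paper uses it), and your column-by-column Frobenius count simply re-derives that degree.
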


\begin{proof}
We use the identity for $C_3$ established in \eqref{eq:suzuki-C-values}.
Since the projective transformation representing $\iota$ is induced by a
matrix in $\GL(4,q)$ and $C_3$ is a Dickson invariant of $\GL(4,q)$,
\[
        C_3(1,x,z,w)=C_3(w,z,x,1).
\]
Moreover, as $C_3$ is homogeneous of degree
\(
        q^4-q^3=q^3(q-1),
\)
we have

\begin{equation*}
C_3(w,z,x,1)
=w^{q^3(q-1)}
  C_3\left(1,\frac zw,\frac xw,\frac1w\right)=w^{q^3(q-1)}B_S\bigl(\iota(\theta)\bigr)^q.
\end{equation*}

By Lemma~\ref{lem:suzuki-theta-iota},
$\iota(\theta)=\theta/\lambda^m$.  Combining this equality with \eqref{eq:suzuki-C-values} gives
\[
        B_S(\theta)^q
        =w^{q^3(q-1)}
          B_S\left(\frac{\theta}{\lambda^m}\right)^q
        =\lambda^{q^3}
          B_S\left(\frac{\theta}{\lambda^m}\right)^q,
\]
which can be written as
\[
        B_S(\theta)^q
        =\left(
          \lambda^{q^2}B_S\left(\frac{\theta}{\lambda^m}\right)
          \right)^q.
\]
Therefore
\[
        B_S(\theta)
        =\lambda^{q^2}B_S\left(\frac{\theta}{\lambda^m}\right),
\]
which proves the claim.
\end{proof}

\begin{proposition}\label{prop:suzuki-invariant}
The function $t_S$ is fixed by $\Sz(q)$.
\end{proposition}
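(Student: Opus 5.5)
Since $\Sz(q)$ is generated by the stabilizer of $P_\infty$ together with the involution $\iota$, it is enough to check that $t_S$ is fixed by these generators. Lemma~\ref{lem:suzuki-stabilizer} already covers the stabilizer, generated by the $\tau_{a,c}$ and $\psi_d$. So the only remaining task is to show $\iota(t_S)=t_S$.

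To do this, apply $\iota$ to the definition \eqref{eq:suzuki-t}. Since $\iota$ is a field automorphism and $B_S$ has coefficients in $\F_2$, we have
\[
\iota(t_S)=\frac{B_S\bigl(\iota(\theta)\bigr)^m}{\iota(\theta)^{q^2}}.
\]
By Lemma~\ref{lem:suzuki-theta-iota}, $\iota(\theta)=\theta/\lambda^m$ with $\lambda=w^{q-1}$. Lemma~\ref{lem:suzuki-key} then gives $B_S(\theta/\lambda^m)=B_S(\theta)/\lambda^{q^2}$. Substituting both facts, we get
\[
\iota(t_S)=\frac{B_S(\theta)^m\,\lambda^{-mq^2}}{\theta^{q^2}\lambda^{-mq^2}}=\frac{B_S(\theta)^m}{\theta^{q^2}}=t_S .
\]
The powers of $\lambda$ cancel exactly, because the exponent $m$ in $\iota(\theta)$ and the exponent $q^2$ in Lemma~\ref{lem:suzuki-key} appear crosswise in the numerator and denominator.

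There is no real obstacle, since all the substantive work is already in Lemmas~\ref{lem:suzuki-theta-iota} and~\ref{lem:suzuki-key}. Two points only need checking. First, $\theta$ and $B_S(\theta)$ are nonzero in $\K(\cS_q)$, so the quotient is defined: $A=x^q+x$ is nonconstant, and $B_S(\theta)$ has a pole at $P_\infty$ dominated by its top-degree term. Second, the generation statement recalled in the preliminaries applies, so invariance under the generators gives invariance under all of $\Sz(q)$.
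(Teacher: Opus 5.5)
Your proposal is correct and follows the paper's proof: reduce to the generators, use Lemma~\ref{lem:suzuki-stabilizer} for the stabilizer of $P_\infty$, then combine Lemmas~\ref{lem:suzuki-theta-iota} and~\ref{lem:suzuki-key} so that the factors $\lambda^{mq^2}$ cancel in $\iota(t_S)$. Your added check that $\theta$ and $B_S(\theta)$ are nonzero is a harmless detail that the paper leaves implicit.
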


\begin{proof}
By Lemma \ref{lem:suzuki-stabilizer}, $t_S$ is fixed by the stabilizer of $P_\infty$.  Since $\Sz(q)$ is generated by this stabilizer and $\iota$, it remains to prove that $\iota(t_S)=t_S$.  Lemmas \ref{lem:suzuki-theta-iota} and \ref{lem:suzuki-key} give
\[
        \iota(\theta)=\frac{\theta}{\lambda^m},
        \qquad
        B_S(\iota(\theta))=B_S\left(\frac{\theta}{\lambda^m}\right)=\frac{B_S(\theta)}{\lambda^{q^2}}.
\]
Therefore
\[
        \iota(t_S)
        =\frac{B_S(\iota(\theta))^m}{\iota(\theta)^{q^2}}
        =\frac{(B_S(\theta)/\lambda^{q^2})^m}{(\theta/\lambda^m)^{q^2}}
        =\frac{B_S(\theta)^m}{\theta^{q^2}}=t_S.
\]
Thus $t_S$ is fixed by $\Sz(q)$.
\end{proof}

\begin{theorem}\label{thm:suzuki-invariant}
One has
\[
        \K(\cS_q)^{\Sz(q)}=\K(t_S).
\]
\end{theorem}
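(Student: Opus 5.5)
By Proposition~\ref{prop:suzuki-invariant}, $\K(t_S)\subseteq \K(\cS_q)^{\Sz(q)}$. Since $\Sz(q)$ acts faithfully on $\K(\cS_q)$, Artin's theorem gives $[\K(\cS_q):\K(\cS_q)^{\Sz(q)}]=|\Sz(q)|=q^2(q-1)(q^2+1)$. The plan is therefore to show that
\[
[\K(\cS_q):\K(t_S)]\le q^2(q-1)(q^2+1).
\]
Combined with the inclusion of fields, this forces equality, because
\[
[\K(\cS_q):\K(t_S)]=[\K(\cS_q):\K(\cS_q)^{\Sz(q)}]\,[\K(\cS_q)^{\Sz(q)}:\K(t_S)].
\]
The degree $[\K(\cS_q):\K(t_S)]$ equals the degree of the pole divisor of $t_S$, or equivalently of its zero divisor. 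I will compute it through the tower $\K(t_S)\subseteq\K(\theta)\subseteq\K(A)\subseteq\K(x)\subseteq\K(\cS_q)$.

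First, $t_S=B_S(\theta)^m/\theta^{q^2}$ is a rational function of $\theta$ of degree $\max(m\deg B_S,\,q^2)$, provided numerator and denominator are coprime. We have $\deg B_S=q+2q_0+1=n$, so $m\deg B_S=mn=q^2+1>q^2$. Coprimality holds because $B_S(0)=1\neq 0$. Hence $[\K(\theta):\K(t_S)]\le q^2+1$, and in fact equality holds.

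Next, $\theta=A^{q-1}$ gives $[\K(A):\K(\theta)]\le q-1$. Since $A=x^q+x$, we get $[\K(x):\K(A)]\le q$. Finally, $[\K(\cS_q):\K(x)]=q$ from the defining equation $y^q+y=x^{q_0}(x^q+x)$. Multiplying these bounds gives
\[
[\K(\cS_q):\K(t_S)]\le q\cdot q\cdot (q-1)\cdot (q^2+1)=|\Sz(q)|,
\]
which is exactly the required bound.

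The argument only needs the upper bound on each step of the tower, so no irreducibility questions arise. The one point that needs care is that $t_S$ is nonconstant, i.e. that $\theta$ is transcendental over $\K$. This holds because $A=x^q+x$ is nonconstant, as $x$ is transcendental. The only potentially delicate step is verifying that $B_S(T)^m$ and $T^{q^2}$ share no common factor, so that the degree of the rational function $t_S(T)$ is really $q^2+1$ and not smaller; this is immediate since $B_S(0)=1$. As a cross-check, one can use pole orders at $P_\infty$: $A$ has a pole of order $q^2$ there, so $\theta$ has pole order $q^2(q-1)$. Then $t_S$ has pole order $q^2(q-1)\bigl(mn-q^2\bigr)=q^2(q-1)$ at $P_\infty$. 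This is consistent with $P_\infty$ having an orbit of size $q^2+1$ under $\Sz(q)$ with ramification index $q^2(q-1)$.
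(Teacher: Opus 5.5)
Your proof is correct and is essentially the paper's argument. You take invariance from the preceding proposition, count $[\K(\cS_q):\K(t_S)]=(q^2+1)(q-1)q^2=|\Sz(q)|$ by writing $t_S$ as a rational function of $\theta$ of degree $q^2+1$ (coprimality from $B_S(0)=1$), and close with Artin's theorem. The one minor difference is that you bound the degree of $\theta$ through the tower $\K(\theta)\subseteq\K(A)\subseteq\K(x)\subseteq\K(\cS_q)$ and need only upper bounds, whereas the paper computes $\deg\theta=q^2(q-1)$ exactly from the divisor of $x^q+x$.
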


\begin{proof}
By Proposition \ref{prop:suzuki-invariant}, $t_S\in\K(\cS_q)^{\Sz(q)}$.  Let
\[
        R_S(T)=\frac{B_S(T)^m}{T^{q^2}}.
\]
The degree of $B_S$ is $n=q+2q_0+1$, and $mn=q^2+1$.  Therefore the numerator $B_S(T)^m$ has degree $q^2+1$, while the denominator has degree $q^2$.  Since $B_S(0)=1$, numerator and denominator are coprime and hence
\[
        \deg R_S=q^2+1.
\]
The function $x^q+x$ has pole divisor $q^2P_\infty$ and has $q^2$ simple affine zeros, namely the points of $\cS_q(\F_q)\setminus\{P_\infty\}$.  Therefore, the degree of the map $\theta=(x^q+x)^{q-1}$ is
\[
        \deg\theta=q^2(q-1).
\]
Thus
\[
        \deg(t_S)=\deg(R_S\circ\theta)=(q^2+1)q^2(q-1)=|\Sz(q)|.
\]
Therefore
\[
        [\K(\cS_q):\K(t_S)]=|\Sz(q)|.
\]
Artin's theorem \cite[Theorem~3.4.1]{Stichtenoth} gives
\[
        [\K(\cS_q):\K(\cS_q)^{\Sz(q)}]=|\Sz(q)|.
\]
Since $\K(t_S)\subseteq\K(\cS_q)^{\Sz(q)}$, the two fields are equal.
\end{proof}

\section{An explicit invariant for the Ree group}

We now apply the same general strategy to the Ree group.  Set
\[
        A=x^q-x,\qquad
        \vartheta=A^{q-1},\qquad
        \mu=q-3q_0+1.
\]
Let
\begin{equation}\label{eq:ree-B}
        B_R(T)=1+T^{E_1}+T^{E_2}-T^{E_3}
        +T^{E_4}+T^{E_5}+T^{E_6},
\end{equation}
where the exponents $E_i$ are those in \eqref{eq:ree-E}.  Explicitly,
\[
\begin{aligned}
B_R(T)={}&1+T^{q^2}+T^{q^2+3q_0q}
          -T^{q^2+3q_0q+q}+T^{q^2+3q_0q+2q}\\
&+T^{q^2+3q_0q+2q+3q_0}
 +T^{q^2+3q_0q+2q+3q_0+1}.
\end{aligned}
\]
We define
\begin{equation}\label{eq:ree-t}
        t_R=\frac{B_R(\vartheta)^\mu}{\vartheta^{q^3}}.
\end{equation}
We prove in this section that
\[
        \K(\cR_q)^{\Ree(q)}=\K(t_R).
\]

For the proof, we use the quadratic form of the
seven-dimensional Ree representation.  Put
\[
 r=3q_0,\qquad
 \alpha=(1,x,w_1,w_2,w_3,w_6,w_8),\qquad
 \beta=(0,1,x,y,z,w_4,w_7).
\]
Vector powers are taken componentwise.  For row vectors
\[
        U=(U_0,\ldots,U_6),\qquad V=(V_0,\ldots,V_6),
\]
define
\begin{equation}\label{eq:ree-pairing}
\begin{aligned}
 \langle U,V\rangle={}&U_0V_6+U_6V_0
 +U_1V_5+U_5V_1\\
 &+U_2V_4+U_4V_2+U_3V_3 .
\end{aligned}
\end{equation}
This is the symmetric bilinear form associated with the standard quadratic
form of the Ree representation; see \cite[Section~2.1]{EidDuursma}.  The
Frobenius-difference identities \eqref{eq:ree-frob-differences} give the
vector identity
\begin{equation}\label{eq:ree-alpha-difference}
        \alpha^q-\alpha=A\beta^r.
\end{equation}
For $i\geq0$, set
\begin{equation}\label{eq:ree-FH}
        F_i=\langle\alpha,\alpha^{q^i}\rangle,
        \qquad
        H_i=\langle\beta,\beta^{q^i}\rangle.
\end{equation}

\begin{lemma}\label{lem:ree-H-values}
Let $L=q+2q_0+1$.  Then
\[
        H_0=H_1=0,\qquad H_2=-A^L,
\]
and
\begin{equation}\label{eq:ree-H3}
\begin{aligned}
H_3={}&-A^{q+2q_0+1}
       -A^{q^2+2q_0+1}
       +A^{q^2+qq_0+q_0+1}\\
&\quad -A^{q^2+2qq_0+1}
       -A^{q^2+2qq_0+q}.
\end{aligned}
\end{equation}
\end{lemma}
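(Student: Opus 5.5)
Write $\beta=(0,1,x,y,z,w_4,w_7)$, so that
\[
H_i=2w_7^{q^i}+2w_7+w_4^{q^i}+w_4+2xz^{q^i}+2zx^{q^i}+y^{1+q^i},
\]
where I use $2=-1$ in characteristic $3$. I would prove the lemma by reducing everything to the Frobenius differences of the coordinates of $\beta$, in the same way $\Delta_{ijk}$ was handled in the Suzuki case.

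\textbf{Differences of the coordinates of $\beta$.} For $x,y,z$ these come from \eqref{eq:ree-curve}. For $w_4$ and $w_7$ I need the analogous identities $w_4^q-w_4=(\cdot)A$ and $w_7^q-w_7=(\cdot)A$. I would derive them by taking $3q_0$-th roots in \eqref{eq:ree-frob-differences}. Put $s_j=w_j^q-w_j$. Since $w_6^q-w_6=w_4^{r}A$, and $w_6=v^{r}-w_2^{r}+xw_4^{r}$, expanding $s_6$ expresses $s_4^{r}$ in terms of known differences. Similarly, $w_8=w_5^{r}+xw_7^{r}$ gives $s_7$.

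Alternatively, and probably more cleanly, I would first prove the identity $\langle\alpha,\alpha\rangle=0$, i.e. that $\alpha$ lies on the quadric, together with $\langle\beta,\beta\rangle=0$. Then I would use \eqref{eq:ree-alpha-difference}. Applying Frobenius gives
\[
0=\langle\alpha^q,\alpha^q\rangle=\langle\alpha,\alpha\rangle+2A\langle\alpha,\beta^r\rangle+A^2\langle\beta,\beta\rangle^{r}.
\]
This yields $\langle\alpha,\beta^r\rangle=0$ and relates the pairings.

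\textbf{$H_0$ and $H_1$.} $H_0=\langle\beta,\beta\rangle=2(w_7+w_4\cdot 1\cdots)$ is checked directly from the definitions of $w_4,w_7$: expand $w_4=xw_2^{q_0}-yw_1^{q_0}$ and $w_7=w_2+v$ in terms of $x,y,z$ and their $q_0$-, $3q_0$-powers and verify cancellation. For $H_1$, write $\beta^q=\beta+A\gamma$, where $\gamma$ is the vector of difference quotients, whose entries are $0,0,1,x^{q_0},x^{2q_0},\dots$. Then $H_1=H_0+A\langle\beta,\gamma\rangle$, and I would check $\langle\beta,\gamma\rangle=0$ by the same type of expansion.

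\textbf{$H_2$ and $H_3$.} Iterate: $\beta^{q^i}=\beta+\sum_{j<i}A^{q^j}\gamma^{q^j}$. Then
\[
H_i=H_0+\sum_{j<i}A^{q^j}\langle\beta,\gamma^{q^j}\rangle.
\]
Each term $\langle\beta,\gamma^{q^j}\rangle$ is evaluated by writing $\beta$ as $\beta^{q^j}$ minus differences. This reduces it to expressions in $A^{q^k}$ and $x^{q_0q^k}-x^{q_0}$, and these in turn are $A^{q_0}$-type sums via $x^{q^k}-x=\sum A^{q^l}$, exactly as in the computation of $\Delta_{013}$. Collecting terms should give $-A^{q+2q_0+1}$ for $H_2$, and for $H_3$ the five monomials of \eqref{eq:ree-H3}. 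A useful check is that the pole orders match the exponents.

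The main obstacle is obtaining correct closed forms for $w_4^q-w_4$ and $w_7^q-w_7$, i.e. the last entries of $\gamma$, given the convention issues flagged for $w_5,w_8$. I would first try extracting them from $w_6^q-w_6$ and $w_8^q-w_8$ by root-taking as above. If that becomes unwieldy, I would fall back on verifying the needed identities on the tower $\K(x,y,z)$ by direct polynomial expansion.
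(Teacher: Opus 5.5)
\textbf{There is a genuine gap, though the overall strategy is the paper's and the fix is short.} You write $\beta^q=\beta+A\gamma$, telescope, and evaluate the pairings with the curve equations. The paper instead uses the recursion $H_{i-1}^q-H_i=\langle\beta^q-\beta,\beta^{q^i}\rangle$, but that difference is immaterial.

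\textbf{Your displayed formula for $H_i$ is wrong.} Since $\beta_0=0$, the coordinate $\beta_6=w_7$ is always paired with $0$, and the $(2,4)$ and $(4,2)$ terms carry coefficient $1$, not $2$. The correct expression is
\[
H_i=w_4^{q^i}+w_4+xz^{q^i}+x^{q^i}z+y^{q^i+1}.
\]
With your formula, $i=0$ gives $H_0=w_7-xz$ after substituting $w_4=y^2-xz$. That is not zero, so the check of $H_0=0$ you describe would fail. It also means half of your ``main obstacle'' is illusory. The entry $\gamma_6=(w_7^q-w_7)/A$ never enters any $\langle\beta,\gamma^{q^j}\rangle$, because $\beta_0=\gamma_0=0$. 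The convention issues for $w_5,w_8$ are therefore irrelevant here.

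\textbf{The missing ingredient is the single entry $\gamma_5=(w_4^q-w_4)/A$.} Root extraction from $w_6^q-w_6=w_4^rA$ is a detour, since it forces you to compute $v^q-v$. Instead, substitute $w_1^{q_0}=x^{q+q_0}-y^q$ and $w_2^{q_0}=x^{q_0}y^q-z^q$ into $w_4=xw_2^{q_0}-yw_1^{q_0}$ and apply the curve equations. This gives $w_4=y^2-xz$, which is what your direct-expansion fallback would find. Hence $\gamma_5=-(x^{q_0}y+z+x^{2q_0+1})$, which is the paper's $u^{q_0}$. From here:
\begin{itemize}
\item $H_0=0$ and $\langle\beta,\gamma\rangle=0$ follow at once.
\item Since $\langle\beta^{q^j},\gamma^{q^j}\rangle=\langle\beta,\gamma\rangle^{q^j}=0$, you get
\[
\langle\beta,\gamma^{q^j}\rangle=-x^{q_0q^j}(y^{q^j}-y)-(z^{q^j}-z)-x^{2q_0q^j}(x^{q^j}-x).
\]
\item For $j=1$ this equals $-A^{2q_0+1}$.
\item For $j=2$ it equals $-A(A+A^q)^{2q_0}-A^{q+2qq_0}$.
\end{itemize}
Multiplying by $A^q$ and by $A^{q^2}$ gives $H_2=-A^L$ and the remaining four terms of $H_3$.

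\textbf{Your ``alternative'' via $\langle\alpha^q,\alpha^q\rangle$ does not help with this lemma.} It yields the mixed identity $\langle\alpha,\beta^r\rangle=0$, not the values of $H_i$. That identity is needed only in the next lemma. There, given $F_0=H_0=0$, your argument is a quicker proof of it than the paper's direct verification.
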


\begin{proof}
Put $u=w_2-xw_1$.  From the definitions of $w_1$ and $w_2$,
\begin{equation}\label{eq:ree-u-q0}
        u^{q_0}=-x^{q_0}y^q-z^q-x^{q+2q_0}.
\end{equation}
Moreover,
\begin{equation}\label{eq:ree-w4-identities}
        w_4=y^2-xz,\qquad
        w_4^q-w_4=Au^{q_0}.
\end{equation}
For completeness, the first identity follows after substituting
\[
        w_2^{q_0}=x^{q_0}y^q-z^q,\qquad
        w_1^{q_0}=x^{q+q_0}-y^q
\]
in the definition of $w_4$.  The second then follows from
\eqref{eq:ree-curve}: indeed,
\[
\begin{aligned}
(y^2-xz)^q-(y^2-xz)
 &=A\bigl(-x^{q_0}y-z-x^{2q_0+1}\bigr)\\
 &=Au^{q_0},
\end{aligned}
\]
where the last equality is obtained by substituting the two equations in
\eqref{eq:ree-curve} into \eqref{eq:ree-u-q0}.

The identity $H_0=0$ follows from $w_4=y^2-xz$ and
characteristic $3$.  Next,
\[
        H_1=w_4^q+w_4+xz^q+x^qz+y^{q+1}.
\]
Using \eqref{eq:ree-w4-identities} and then \eqref{eq:ree-curve}, this
becomes
\[
        H_1=A\bigl(u^{q_0}+x^{2q_0+1}+z+x^{q_0}y\bigr)=0
\]
by \eqref{eq:ree-u-q0}.

Since
\[
        H_1^q-H_2
        =\langle\beta^q-\beta,\beta^{q^2}\rangle,
\]
the two equations in \eqref{eq:ree-curve} and
\eqref{eq:ree-w4-identities} yield
\begin{equation}\label{eq:ree-H2-difference}
\begin{aligned}
H_1^q-H_2
 &=A\bigl(u^{q_0}+z^{q^2}+x^{q^2+2q_0}
              +x^{q_0}y^{q^2}\bigr)\\
 &=A^{q+1}\bigl(x^{2q_0q}
              +x^{q_0(q+1)}+x^{2q_0}\bigr)\\
 &=A^{q+2q_0+1}.
\end{aligned}
\end{equation}
For the last equality we used
\[
 x^{2q_0q}+x^{q_0(q+1)}+x^{2q_0}
 =(x^q-x)^{2q_0}=A^{2q_0}.
\]
Since $H_1=0$, formula \eqref{eq:ree-H2-difference} gives
$H_2=-A^L$.

Similarly,
\begin{equation}\label{eq:ree-H3-difference}
        H_2^q-H_3=AC,
\end{equation}
where
\[
        C=u^{q_0}+z^{q^3}+x^{q^3+2q_0}
          +x^{q_0}y^{q^3}.
\]
Iterating the two equations in \eqref{eq:ree-curve} gives
\begin{align*}
C={}&A^q\bigl(x^{2q_0q}+x^{q_0(q+1)}+x^{2q_0}\bigr)\\
&+A^{q^2}\bigl(x^{2q_0q^2}
               +x^{q_0(q^2+1)}+x^{2q_0}\bigr)\\
={}&A^{q+2q_0}+A^{q^2}(A^q+A)^{2q_0}\\
={}&A^{q+2q_0}+A^{q^2+2q_0}
     -A^{q^2+q_0(q+1)}+A^{q^2+2q_0q}.
\end{align*}
Substitution in \eqref{eq:ree-H3-difference}, together with
$H_2^q=-A^{qL}$, proves \eqref{eq:ree-H3}.
\end{proof}

\begin{lemma}\label{lem:ree-low-F}
One has
\[
        F_0=F_1=F_2=0.
\]
\end{lemma}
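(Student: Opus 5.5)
We need $F_0=F_1=F_2=0$, where $F_i=\langle\alpha,\alpha^{q^i}\rangle$. The plan is to compute $F_0$ directly and then obtain $F_1$ and $F_2$ by recursion from the vector identity \eqref{eq:ree-alpha-difference}, mirroring how $H_1,H_2$ were obtained in Lemma~\ref{lem:ree-H-values}.

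\textbf{Step 1: $F_0$.} Here
\[
F_0=2w_8+2xw_6+2w_1w_3+w_2^2 .
\]
In characteristic $3$ this reads $F_0=-(w_8+xw_6+w_1w_3)+w_2^2$. So $F_0=0$ is the statement that $\alpha$ lies on the quadric preserved by the representation, i.e.\ that the curve lies on the Ree ovoid's quadric. I would verify it by expanding $w_6$, $w_8$, $w_3$ and $w_1$ through their definitions. Since this is an identity in $\K(\cR_q)$ recorded in \cite{EidDuursma}, I would cite it there if possible.

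An alternative check: $F_0$ is a function whose pole order at $P_\infty$ is at most $E_6$. One could try to show it is $\Ree(q)$-semi-invariant and hence constant, but a direct verification is safer.

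\textbf{Step 2: $F_1$.} By bilinearity and \eqref{eq:ree-alpha-difference},
\[
F_1-F_0=\langle\alpha,\alpha^q-\alpha\rangle=A\langle\alpha,\beta^r\rangle .
\]
So it suffices to show $G:=\langle\alpha,\beta^r\rangle=0$. I would again use \eqref{eq:ree-alpha-difference} and symmetry:
\[
F_0^q-F_0=\langle\alpha^q,\alpha^q\rangle-\langle\alpha,\alpha\rangle=2A\langle\alpha,\beta^r\rangle+A^2H_0^r .
\]
Since $H_0=0$ by Lemma~\ref{lem:ree-H-values} and $F_0=0$, this gives $2AG=0$, hence $G=0$ and $F_1=0$.

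\textbf{Step 3: $F_2$.} Write
\[
F_2=\langle\alpha,\alpha^{q^2}\rangle,\qquad \alpha^{q^2}=\alpha+A\beta^r+A^q\beta^{rq}.
\]
Then $F_2=F_0+AG+A^q\langle\alpha,\beta^{rq}\rangle$, so we need $\langle\alpha,\beta^{rq}\rangle=0$. Substitute $\alpha=\alpha^q-A\beta^r$:
\[
\langle\alpha,\beta^{rq}\rangle=G^q-A H_1^r=0,
\]
using $G=0$ and $H_1=0$ from Lemma~\ref{lem:ree-H-values}. This gives $F_2=0$.

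\textbf{Main obstacle.} The only genuinely computational step is $F_0=0$. The recursions in Steps 2 and 3 are formal once $H_0=H_1=0$ is available. For $F_0$, I would reduce everything to $x,y,z,w_1,w_2,w_3$ using the definitions of $w_4,v,w_5,w_6,w_7,w_8$, and use the identity $w_4=y^2-xz$ from \eqref{eq:ree-w4-identities} to simplify. The expected difficulty is bookkeeping with the $q_0$- and $3q_0$-th powers, and with the sign conventions for $w_5$ and $w_8$.
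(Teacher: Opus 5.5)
Your proof is correct. Step 1 matches the paper, which also gets $F_0=0$ by citing the quadric $w_8+xw_6+w_1w_3-w_2^2=0$ from Eid--Duursma. Step 3 uses the same ingredients as the paper: the expansion of $\alpha^{q^2}$, the mixed pairing, and $H_0=H_1=0$. The paper pairs $\beta^r$ with $\alpha^{q^2}$ and compares with $F_1^q$, whereas you substitute $\alpha=\alpha^q-A\beta^r$ to get $\langle\alpha,\beta^{rq}\rangle=G^q-AH_1^r$; this is only a cosmetic difference.

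The genuine difference is how you obtain the mixed identity $G=\langle\alpha,\beta^r\rangle=0$.
\begin{itemize}
\item The paper computes it directly. It writes $M=w_7^r+w_6+xw_4^r+x^rw_3+z^rw_1+y^rw_2$, eliminates $w_7$, $w_6$ and $v$ using their definitions and the Frobenius differences, and reduces to an auxiliary expression $T$. It then shows $T=0$ from the definitions of $w_1,w_2,w_4$.
\item You get it essentially for free by applying the $q$-Frobenius to the quadric identity. Since the form has coefficients in $\F_3$, you have $0=F_0^q-F_0=2AG+A^2H_0^r$. With $H_0=0$, $A\neq 0$ and $2\neq 0$ in characteristic $3$, this forces $G=0$.
\end{itemize}

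Your route is shorter. It shows that the mixed identity, which the paper reuses in the recurrence of Proposition~\ref{prop:ree-bilinear-identity}, is a formal consequence of $F_0=0$ and $H_0=0$; this relies essentially on $p\neq 2$.

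The paper's route verifies the mixed identity from the defining relations, independently of the cited quadric. It also never involves $w_8$, so it does not depend on the $w_5$/$w_8$ convention the paper takes care to fix. This gives an independent check, although in both versions $F_1=F_0+AG$ still needs $F_0=0$.
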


\begin{proof}
The standard quadratic equation of the Ree ovoid is
\begin{equation}\label{eq:ree-quadric}
        w_8+xw_6+w_1w_3-w_2^2=0;
\end{equation}
see \cite[Section~4]{EidDuursma}.  Since the
characteristic is $3$, equation \eqref{eq:ree-quadric} is precisely
$F_0=0$.

We also need the mixed pairing
\begin{equation}\label{eq:ree-mixed-zero}
        \langle\beta^r,\alpha\rangle=0.
\end{equation}
We verify it directly.  Write its left hand side as
\[
        M=w_7^r+w_6+xw_4^r+x^rw_3+z^rw_1+y^rw_2.
\]
Using $w_7=w_2+v$ and
$w_6=v^r-w_2^r+xw_4^r$, and recalling that $2=-1$, gives
\[
        M=-v^r-xw_4^r+x^rw_3+z^rw_1+y^rw_2.
\]
Now
\[
        v^r=x^rw_3^q-z^rw_1^q,
\]
and \eqref{eq:ree-frob-differences} therefore gives
\[
        M=-z^rw_1-xw_4^r+y^rw_2.
\]
Finally, if
\[
        T=xw_4^r+z^rw_1-y^rw_2,
\]
then the definitions of $w_1,w_2,w_4$ give
\begin{align*}
T
&=x^{r+1}w_2^q-xy^rw_1^q+z^rw_1-y^rw_2\\
&=(w_1+y^r)(w_2^q-w_2)-xy^r(w_1^q-w_1)\\
&=x^{r+1}y^rA-x^{r+1}y^rA=0.
\end{align*}
Thus $M=-T=0$, proving \eqref{eq:ree-mixed-zero}.

From \eqref{eq:ree-alpha-difference},
\[
        F_1=F_0+A\langle\alpha,\beta^r\rangle=0.
\]
Furthermore,
\[
        \alpha^{q^2}=\alpha+A\beta^r+A^q\beta^{rq}.
\]
Pairing this identity with $\beta^r$ and using
\eqref{eq:ree-mixed-zero} and $H_0=H_1=0$, we obtain
\[
        \langle\beta^r,\alpha^{q^2}\rangle=0.
\]
Since
\[
F_1^q=\langle\alpha^q,\alpha^{q^2}\rangle
      =F_2+A\langle\beta^r,\alpha^{q^2}\rangle,
\]
it follows that $F_2=0$.
\end{proof}

\begin{proposition}\label{prop:ree-bilinear-identity}
In $\K(\cR_q)$ one has
\begin{equation}\label{eq:ree-F3-F4}
        F_3=A^{E_6},\qquad
        F_4=A^{E_6}B_R(\vartheta).
\end{equation}
In particular,
\begin{equation}\label{eq:ree-B-pairing}
        B_R(\vartheta)=\frac{F_4}{F_3}.
\end{equation}
\end{proposition}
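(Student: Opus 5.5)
We plan to compute $F_3$ and $F_4$ by the same Frobenius-difference recursion that gave $F_1=F_2=0$ in Lemma~\ref{lem:ree-low-F}. The inputs are the vector identity \eqref{eq:ree-alpha-difference}, the mixed vanishing \eqref{eq:ree-mixed-zero}, and the values of $H_i$ from Lemma~\ref{lem:ree-H-values}.

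\emph{Expansion of $\alpha^{q^i}$.} Iterating \eqref{eq:ree-alpha-difference} gives
\[
\alpha^{q^i}=\alpha+\sum_{j=0}^{i-1}A^{q^j}\beta^{rq^j}.
\]
Hence
\[
F_i=F_0+\sum_{j=0}^{i-1}A^{q^j}\langle\alpha,\beta^{rq^j}\rangle .
\]
So everything reduces to the quantities
\[
G_j:=\langle\alpha,\beta^{rq^j}\rangle .
\]
We already know $G_0=0$ from \eqref{eq:ree-mixed-zero}. To compute $G_j$ for $j\ge1$, we raise $G_0=0$ to the power $q^j$. This gives $\langle\alpha^{q^j},\beta^{rq^j}\rangle=0$, and substituting the expansion of $\alpha^{q^j}$ yields
\[
G_j=-\sum_{k=0}^{j-1}A^{q^k}\langle\beta^{rq^k},\beta^{rq^j}\rangle
=-\sum_{k=0}^{j-1}A^{q^k}H_{j-k}^{\,rq^k}.
\]
Here we use that the form has coefficients in $\F_3$, so $\langle U,V\rangle^{r q^k}=\langle U^{rq^k},V^{rq^k}\rangle$.

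\emph{Computing $F_3$ and $F_4$.} By Lemma~\ref{lem:ree-H-values}, $H_1=0$ and $H_2=-A^L$. This gives $G_1=0$ and $G_2=A^{rL}$, hence
\[
F_3=A^{q^2}G_2=A^{q^2+rL}=A^{q^2+3q_0q+6q_0^2+3q_0}.
\]
Since $6q_0^2=2q$, this is $A^{E_6}$ up to the exponent check; the exponents match, because $E_6=q^2+3q_0q+2q+3q_0+1$ and we must account for the extra factor $A$. So I would carefully recheck whether the correct expression is $G_j$ or whether an additional $A$ arises from pairing with $\alpha$ versus $\alpha^q$. We would fix the bookkeeping so that $F_3=A^{E_6}$ exactly.

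For $F_4$ we additionally need
\[
G_3=-A\,H_3^{\,r}-A^{q}H_2^{\,rq},
\]
and then $F_4=F_3+A^{q^3}G_3$. We substitute \eqref{eq:ree-H3} raised to the power $r=3q_0$, together with $H_2^{rq}=-A^{rqL}$. Dividing by $A^{E_6}$, every resulting monomial should be a power of $A^{q-1}=\vartheta$. The plan is to match these terms one by one with $\vartheta^{E_1},\dots,\vartheta^{E_6}$ and their signs, and the constant $1$ comes from $F_3/A^{E_6}$. Thus we would get $F_4=A^{E_6}B_R(\vartheta)$, and \eqref{eq:ree-B-pairing} follows by division, since $F_3\neq0$.

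\emph{Main obstacle.} The main difficulty is the exponent and sign bookkeeping in the last step. We must expand $H_3^{3q_0}$, where each exponent gets multiplied by $3q_0$ and reduced using $3q_0^2=q$, then multiply by the powers of $A^{q^3}$, and finally check that the seven resulting exponents are exactly $E_6+(q-1)E_i$. The sign $-T^{E_3}$ must arise from the $+A^{q^2+qq_0+q_0+1}$ term of $H_3$. Before committing to the general algebra, I would first verify the identity numerically for $q_0=1$, $q=3$, to make sure the recursion is correctly normalized.
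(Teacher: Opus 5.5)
Your strategy is essentially the paper's, with the bookkeeping moved to the other slot of the pairing. The paper uses $F_h-F_{h-1}^q=-A\langle\beta^r,\alpha^{q^h}\rangle=-A\sum_{j=1}^{h-1}A^{q^j}H_j^r$. You instead expand $F_i=\sum_j A^{q^j}G_j$ and obtain $G_j$ by Frobenius-twisting $\langle\alpha,\beta^r\rangle=0$; this is the same computation.

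Your general formula for $G_j$ is correct, but you misapplied it at $j=2$. You dropped the coefficient $A^{q^0}=A$ of the $k=0$ term, which you did keep in $G_3$. Correctly,
\[
G_2=-A\,H_2^{r}-A^{q}H_1^{rq}=-A\,(-A^{L})^{r}=A^{1+rL},
\qquad
F_3=A^{q^2}G_2=A^{1+q^2+rL}=A^{E_6},
\]
since $rL=3q_0q+2q+3q_0$. So the missing factor $A$ has nothing to do with pairing against $\alpha$ versus $\alpha^q$, and no adjustment is needed to force $F_3=A^{E_6}$. As written, though, the proposal asserts a wrong value of $G_2$, misdiagnoses where the discrepancy comes from, and leaves it unresolved.

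With your $G_3$, which is correct, one then gets
\[
F_4=F_3+A^{q^3}G_3=A^{E_6}+A^{q^3+q+rqL}-A^{q^3+1}H_3^{r}=A^{E_6}+A^{qE_6}-A^{q^3+1}H_3^{r},
\]
because $qE_6=q+q^3+rqL$. This is exactly the intermediate formula in the paper.

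The matching you postponed does go through:
\begin{itemize}
\item If $d_1,\dots,d_5$ are the exponents in \eqref{eq:ree-H3}, then $q^3+1+rd_j=E_6+(q-1)E_j$ follows from $q=3q_0^2$; for instance $rd_3=3q_0q^2+q^2+q+3q_0$.
\item $A^{qE_6}=A^{E_6}\vartheta^{E_6}$.
\item The overall minus sign in front of $A^{q^3+1}H_3^r$ turns the four negative terms of $H_3$ into $+\vartheta^{E_1},+\vartheta^{E_2},+\vartheta^{E_4},+\vartheta^{E_5}$, and its single positive term into $-\vartheta^{E_3}$.
\end{itemize}
With the $G_2$ slip corrected and this check written out, your argument is complete.
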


\begin{proof}
Iterating \eqref{eq:ree-alpha-difference} gives
\begin{equation}\label{eq:ree-alpha-iterate}
        \alpha^{q^h}
        =\alpha+\sum_{j=0}^{h-1}A^{q^j}\beta^{rq^j}.
\end{equation}
Since the coefficients of the pairing lie in the prime field $\F_3$, we have
\[
        \langle\beta^r,\beta^{rq^j}\rangle=H_j^r.
\]
Consequently, \eqref{eq:ree-mixed-zero} and
\eqref{eq:ree-alpha-iterate} give the recurrence
\begin{equation}\label{eq:ree-F-recurrence}
\begin{aligned}
F_h-F_{h-1}^q
 &=-A\langle\beta^r,\alpha^{q^h}\rangle\\
 &=-A\sum_{j=1}^{h-1}A^{q^j}H_j^r.
\end{aligned}
\end{equation}
For $h=3$, Lemmas~\ref{lem:ree-H-values} and \ref{lem:ree-low-F}
give
\[
F_3=-A\,A^{q^2}H_2^r
   =A^{1+q^2+rL}
   =A^{E_6},
\]
because $1+q^2+rL=E_6$.

For $h=4$, the same recurrence gives
\begin{equation}\label{eq:ree-F4-before}
        F_4=A^{qE_6}+A^{E_6}-A^{q^3+1}H_3^r.
\end{equation}
Let the five exponents occurring in \eqref{eq:ree-H3} be denoted by
$d_1,\ldots,d_5$.  The correspondence with the exponents in
\eqref{eq:ree-B} is
\[
\begin{array}{c|c|c}
j&d_j&(q^3+1+rd_j-E_6)/(q-1)\\ \hline
1&q+2q_0+1&q^2=E_1\\
2&q^2+2q_0+1&q^2+3q_0q=E_2\\
3&q^2+qq_0+q_0+1&q^2+3q_0q+q=E_3\\
4&q^2+2qq_0+1&q^2+3q_0q+2q=E_4\\
5&q^2+2qq_0+q&q^2+3q_0q+2q+3q_0=E_5 .
\end{array}
\]
Each entry in the last column follows from $q=3q_0^2$ and
$r=3q_0$.  Thus
\begin{equation}\label{eq:ree-exponent-match}
 q^3+1+rd_j=E_6+(q-1)E_j\quad(1\leq j\leq5),
 \qquad
 qE_6=E_6+(q-1)E_6.
\end{equation}
Since $r$ is a power of $3$, raising \eqref{eq:ree-H3} to the
$r$-th power raises each monomial separately and preserves its sign.
Substituting \eqref{eq:ree-H3} and \eqref{eq:ree-exponent-match}
in \eqref{eq:ree-F4-before}, and recalling that
$\vartheta=A^{q-1}$, yields
\[
\begin{aligned}
F_4
 =A^{E_6}\bigl(&1+\vartheta^{E_1}+\vartheta^{E_2}
 -\vartheta^{E_3}+\vartheta^{E_4}\\
 &+\vartheta^{E_5}+\vartheta^{E_6}\bigr)
 =A^{E_6}B_R(\vartheta).
\end{aligned}
\]
This proves \eqref{eq:ree-F3-F4} and \eqref{eq:ree-B-pairing}.  Notice
that the positive term in $H_3$ is exactly what produces the minus sign
of the term $T^{E_3}$ in $B_R(T)$.
\end{proof}

\begin{lemma}\label{lem:ree-stabilizer}
The function $t_R$ is fixed by the stabilizer of $P_\infty$ in $\Ree(q)$.
\end{lemma}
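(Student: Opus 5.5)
The plan mirrors Lemma~\ref{lem:suzuki-stabilizer}. Since $t_R=B_R(\vartheta)^\mu/\vartheta^{q^3}$ is a rational expression in $\vartheta=A^{q-1}$ with $A=x^q-x$ alone, it suffices to show that every element of the stabilizer of $P_\infty$ fixes $\vartheta$. By the explicit description of this stabilizer recalled in Section~2, each such element $g$ acts on $x$ by $x\mapsto ax+b$ with $a\in\F_q^*$ and $b\in\F_q$; the formulas for $y$ and $z$ play no role.

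The computation is then immediate. Since $a,b\in\F_q$, we have $a^q=a$ and $b^q=b$, hence
\[
g(A)=(ax+b)^q-(ax+b)=a(x^q-x)+(b^q-b)=aA .
\]
Raising to the $(q-1)$-st power gives $g(\vartheta)=a^{q-1}\vartheta=\vartheta$, because $a\in\F_q^*$. As $g$ is a field automorphism fixing $\K$, and in particular the coefficients $\pm1$ of $B_R$, it follows that $g(B_R(\vartheta))=B_R(\vartheta)$. Therefore $g(t_R)=B_R(\vartheta)^\mu/\vartheta^{q^3}=t_R$.

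There is no real obstacle here. The only point to check is that the stated form of the stabilizer is the full stabilizer of $P_\infty$ in $\Ree(q)$, that is, all $q^3(q-1)$ elements with parameters $a\in\F_q^*$ and $b,c,d\in\F_q$. This is taken from the preliminaries. With that in hand, the proof reduces to the one-line invariance of $\vartheta$ given above.
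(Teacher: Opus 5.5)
Your proposal is correct and matches the paper's proof. Both arguments observe that every element of the stabilizer acts by $x\mapsto ax+b$ with $a\in\F_q^*$, $b\in\F_q$, hence sends $A$ to $aA$ and fixes $\vartheta=A^{q-1}$, so $t_R$, a rational function of $\vartheta$, is fixed.
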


\begin{proof}
An element of the stabilizer sends $x$ to $ax+b$, where
$a\in\F_q^*$ and $b\in\F_q$.  Hence
\[
        (ax+b)^q-(ax+b)=a(x^q-x)=aA.
\]
Therefore
\[
        \vartheta\mapsto(aA)^{q-1}
        =a^{q-1}\vartheta=\vartheta.
\]
Since $t_R$ is a rational function of $\vartheta$, it is fixed by the
stabilizer.
\end{proof}

Put
\[
        \Lambda=w_8^{q-1}.
\]

\begin{lemma}\label{lem:ree-A-phi}
Under the involution $\phi$ one has
\[
        \phi(A)=\frac{A}{w_8^\mu},
        \qquad
        \phi(\vartheta)=\frac{\vartheta}{\Lambda^\mu}.
\]
\end{lemma}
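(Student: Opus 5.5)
We follow the proof of Lemma~\ref{lem:suzuki-theta-iota}, now in the Ree setting. Since $\phi(x)=w_6/w_8$, we have
\[
        \phi(A)=\Bigl(\frac{w_6}{w_8}\Bigr)^q-\frac{w_6}{w_8}
        =\frac{w_6^qw_8-w_6w_8^q}{w_8^{q+1}}.
\]
We then rewrite the numerator with the Frobenius-difference identities \eqref{eq:ree-frob-differences}, namely $w_6^q=w_6+w_4^{r}A$ and $w_8^q=w_8+w_7^{r}A$, where $r=3q_0$. This gives
\[
        w_6^qw_8-w_6w_8^q=A\bigl(w_4^{r}w_8-w_6w_7^{r}\bigr).
\]
The whole lemma therefore reduces to the single polynomial identity
\[
        w_4^{r}w_8-w_6w_7^{r}=w_8^{3q_0}.
\]
Granting it, $\phi(A)=A\,w_8^{3q_0}/w_8^{q+1}=A/w_8^{\mu}$, since $q+1-3q_0=\mu$. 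Raising to the $(q-1)$-st power then gives $\phi(\vartheta)=\vartheta/\Lambda^\mu$ at once.

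Two quick checks support this target identity. The pole orders from \eqref{eq:ree-pole-orders} match, because $3q_0E_6$ and $E_6+3q_0\cdot(\text{pole order of }w_4)$ must agree. The identity is also the exact analogue of $x^{2q_0}w+zy^{2q_0}=w^{2q_0}$ in the Suzuki case.

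To prove the identity, I would use the componentwise description of $\alpha$ and $\beta$, reading it as a coordinate relation. Write $\beta^r=(0,1,x^r,y^r,z^r,w_4^r,w_7^r)$. The expression $w_4^rw_8-w_6w_7^r$ is a $2\times2$ minor built from the last two coordinates of $\alpha$ and of $\beta^r$. My first attempt would be to expand everything in terms of $x,y,z,w_1,w_2,w_3,v$, using the definitions $w_6=v^r-w_2^r+xw_4^r$, $w_7=w_2+v$ and $w_8=w_5^r+xw_7^r$. Substituting, the terms $xw_4^rw_7^r$ cancel, and we get
\[
        w_4^rw_8-w_6w_7^r=w_4^rw_5^r-(v^r-w_2^r)(w_2+v)^r
        =\bigl(w_4w_5-(v-w_2)(w_2+v)\bigr)^r
        =\bigl(w_4w_5-v^2+w_2^2\bigr)^r.
\]
This is a clean reduction, because everything is now an $r$-th power. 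It then suffices to prove
\[
        w_8^{q_0}\;\overset{?}{=}\;\ldots
\]
or, more precisely, that $w_4w_5-v^2+w_2^2$ equals the appropriate $r$-th root of $w_8^{3q_0}$, namely $w_8$ itself. In other words, the remaining task is to check
\[
        w_8=w_4w_5+w_2^2-v^2,
\]
an identity of the same type as $w=y^2+xz$ in the Suzuki case. I would establish it by inserting the definitions of $w_4,w_5,v$ in terms of $x,y,z,w_1,w_2,w_3$. An alternative route is to combine the quadric relation \eqref{eq:ree-quadric}, $w_8=-xw_6-w_1w_3+w_2^2$, with the mixed pairing $\langle\beta^r,\alpha\rangle=0$ from \eqref{eq:ree-mixed-zero}.

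The main obstacle is this last verification. It is the Ree analogue of $w=y^2+xz$, and it may require the relations among $w_1,w_2,w_3$ (such as $w_4=y^2-xz$ from Lemma~\ref{lem:ree-H-values}) together with characteristic-$3$ cancellations. If direct expansion becomes unwieldy, the fallback is to compare the divisors of $\phi(A)$ and $A/w_8^\mu$. Both functions have the same zeros at the affine $\F_q$-points away from $\phi(P_\infty)$, by Lemma~\ref{lem:ree-stabilizer}-type transitivity. Their pole and zero behaviour is determined by \eqref{eq:ree-pole-orders}, which forces the ratio to be a constant, and evaluating leading coefficients at $P_\infty$ then shows that this constant is $1$.
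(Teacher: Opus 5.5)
Your main line is the paper's own: expand $\phi(A)$, apply the Frobenius differences for $w_6$ and $w_8$, and reduce everything to
\[
w_4^{3q_0}w_8-w_6w_7^{3q_0}=w_8^{3q_0}.
\]
Your rewriting of the left side as $(w_4w_5+w_2^2-v^2)^{3q_0}$ is correct. Since Frobenius is injective, the identity is therefore equivalent to $w_8=w_4w_5+w_2^2-v^2$. But this identity is the whole nontrivial content of the lemma, and you do not prove it. The expansion in $x,y,z,w_1,w_2,w_3$, the route through \eqref{eq:ree-quadric} and \eqref{eq:ree-mixed-zero}, and the divisor fallback are only announced. The paper closes exactly this step by importing the relation \eqref{eq:ree-w8-relation} from Skabelund's proof of Lemma~4.2. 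Without that citation or an actual verification, your argument is incomplete.

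Your first ``quick check'' is also false. Since $q=3q_0^2\equiv 0\pmod{3q_0}$, one has $E_6\equiv 1$ and $E_5\equiv 0\pmod{3q_0}$. Hence $E_6+3q_0k\equiv 1\not\equiv 0\equiv 3q_0E_6\pmod{3q_0}$ for every $k$, so the two terms of $w_4^{3q_0}w_8-w_6w_7^{3q_0}$ have different pole orders and cannot cancel. The pole $3q_0E_6$ must therefore come from $w_6w_7^{3q_0}$, not from $w_8w_4^{3q_0}$. Moreover, the pole order of $w_4$ is not listed in \eqref{eq:ree-pole-orders} at all.

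The divisor fallback can be made into a genuinely different proof, but not as you state it. Pole orders at $P_\infty$ do not locate the zeros of $w_8$. What you need is $\phi(w_8)=1/w_8$: since $w_8$ is regular away from $P_\infty$, this forces $\operatorname{div}(w_8)=E_6\bigl(\phi(P_\infty)-P_\infty\bigr)$. Combine this with three facts:
\begin{itemize}
\item $\operatorname{div}(A)=\sum_{P\in\cR_q(\F_q)\setminus\{P_\infty\}}P-q^3P_\infty$;
\item $\phi$ is defined over $\F_q$ and hence permutes $\cR_q(\F_q)$ (this is not a ``transitivity'' statement);
\item $\mu E_6=q^3+1$.
\end{itemize}
Together they give $\operatorname{div}(\phi(A))=\operatorname{div}(A/w_8^{\mu})$, so $\phi(A)=cA/w_8^{\mu}$ for some $c\in\K^*$.

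You never carry out the step ``evaluating leading coefficients.'' Applying $\phi$ twice gives $c^2=1$, hence $c^{q-1}=1$ because $q-1$ is even. This already proves $\phi(\vartheta)=\vartheta/\Lambda^{\mu}$ without any polynomial identity, and that is the only part used in Lemma~\ref{lem:ree-key} and Proposition~\ref{prop:ree-invariant}. The claim $\phi(A)=A/w_8^{\mu}$ itself still requires excluding $c=-1$. That needs an actual comparison of the leading coefficients of $-w_6w_7^{3q_0}$ and $w_8^{3q_0}$ at $P_\infty$, which is exactly the sign information carried by \eqref{eq:ree-w8-relation}.
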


\begin{proof}
Since $\phi(x)=w_6/w_8$,
\[
\phi(A)=\left(\frac{w_6}{w_8}\right)^q-\frac{w_6}{w_8}
       =\frac{w_6^qw_8-w_6w_8^q}{w_8^{q+1}}.
\]
Using \eqref{eq:ree-frob-differences}, the numerator becomes
\[
\begin{aligned}
w_6^qw_8-w_6w_8^q
 &=(w_6+w_4^{3q_0}A)w_8-w_6(w_8+w_7^{3q_0}A)\\
 &=A(w_4^{3q_0}w_8-w_6w_7^{3q_0}).
\end{aligned}
\]
The standard Ree relation
\begin{equation}\label{eq:ree-w8-relation}
        w_8^{3q_0}=w_8w_4^{3q_0}-w_6w_7^{3q_0},
\end{equation}
used in the proof of \cite[Lemma~4.2]{Skabelund}, therefore gives
\[
        w_6^qw_8-w_6w_8^q=Aw_8^{3q_0}.
\]
Hence
\[
        \phi(A)=\frac{A}{w_8^{q-3q_0+1}}
               =\frac{A}{w_8^\mu}.
\]
Taking $(q-1)$-st powers yields
\[
        \phi(\vartheta)
        =\phi(A)^{q-1}
        =\frac{\vartheta}{\Lambda^\mu}.
\]
\end{proof}

\begin{lemma}\label{lem:ree-key}
In $\K(\cR_q)$ one has
\begin{equation}\label{eq:ree-key}
        B_R\left(\frac{\vartheta}{\Lambda^\mu}\right)
        =\frac{B_R(\vartheta)}{\Lambda^{q^3}}.
\end{equation}
\end{lemma}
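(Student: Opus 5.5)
The plan is to imitate the Suzuki argument of Lemma~\ref{lem:suzuki-key}. There, homogeneity of a $\GL$-invariant under the linear map representing the involution gave the identity. Here we use the pairing identity \eqref{eq:ree-B-pairing}, $B_R(\vartheta)=F_4/F_3$, in place of $C_3$.

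The key observation is that $\phi$ acts on the coordinate vector $\alpha=(1,x,w_1,w_2,w_3,w_6,w_8)$ projectively through a matrix with entries in $\F_3$. By the formulas for $\phi$ recalled in Section~2,
\[
        \phi(\alpha)=\frac{1}{w_8}\,(w_8,w_6,w_3,-w_2,w_1,x,1)=\frac1{w_8}\,\alpha P,
\]
where $P$ reverses the coordinates and changes the sign of the middle one. First I will check that $P$ preserves the pairing \eqref{eq:ree-pairing}: the pairs $(0,6),(1,5),(2,4)$ are swapped, and $U_3V_3\mapsto(-U_3)(-V_3)$. This is a direct inspection. Since $P$ has entries in $\F_3$, it commutes with $q^i$-th powers, so
\[
        \phi(F_i)=\langle\phi(\alpha),\phi(\alpha)^{q^i}\rangle
        =w_8^{-(1+q^i)}\langle\alpha P,\alpha^{q^i}P\rangle
        =\frac{F_i}{w_8^{1+q^i}}.
\]

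Next I apply $\phi$ to Proposition~\ref{prop:ree-bilinear-identity}. From $F_3=A^{E_6}$ and Lemma~\ref{lem:ree-A-phi} we get $\phi(F_3)=A^{E_6}w_8^{-\mu E_6}=F_3w_8^{-(q^3+1)}$, using $\mu E_6=q^3+1$. This agrees with the general formula above, which is a useful consistency check. For $F_4$ we obtain $\phi(F_4)=F_4w_8^{-(q^4+1)}$. Therefore
\[
        B_R(\phi(\vartheta))=\phi\Bigl(\frac{F_4}{F_3}\Bigr)=\frac{F_4}{F_3}\,w_8^{(q^3+1)-(q^4+1)}
        =B_R(\vartheta)\,w_8^{-q^3(q-1)}=\frac{B_R(\vartheta)}{\Lambda^{q^3}}.
\]
Substituting $\phi(\vartheta)=\vartheta/\Lambda^\mu$ from Lemma~\ref{lem:ree-A-phi} then gives \eqref{eq:ree-key}.

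The main obstacle I expect is justifying $\phi(B_R(\vartheta))=B_R(\phi(\vartheta))$ together with the transformation of $F_i$. Since $B_R$ has coefficients in $\F_3$ and $\phi$ is a field automorphism fixing $\K$, the first part is automatic. The real content is that the listed formulas for $\phi$ on all seven coordinates hold simultaneously, with the common factor $1/w_8$ and the sign on $w_2$ as given. This is cited from the literature and I would take it as given. The sign on the middle coordinate is exactly what makes $P$ an isometry of the form, since $(-1)^2=1$. I would also check that $F_3\neq0$, which follows from $F_3=A^{E_6}$, so the quotient is legitimate.
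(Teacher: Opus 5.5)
Your proposal is correct and follows the paper's own proof: you write $\phi(\alpha)=J\alpha/w_8$, observe that $J$ is an $\F_3$-isometry of the pairing commuting with Frobenius, deduce $\phi(F_i)=F_i/w_8^{q^i+1}$, and apply this to $B_R(\vartheta)=F_4/F_3$. Your extra consistency check via $F_3=A^{E_6}$ and $\mu E_6=q^3+1$ is not in the paper, but it is a harmless and correct addition.
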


\begin{proof}
Let
\[
        J(U_0,U_1,U_2,U_3,U_4,U_5,U_6)
        =(U_6,U_5,U_4,-U_3,U_2,U_1,U_0).
\]
The coordinate formulas for $\phi$ say precisely that
\[
        \phi(\alpha)=\frac{J\alpha}{w_8}.
\]
Moreover, $J$ preserves the pairing \eqref{eq:ree-pairing}; indeed,
the middle term $U_3V_3$ is unchanged by the simultaneous change
of sign in the two middle coordinates.  Since $J$ has coefficients
in $\F_3$, it commutes with the componentwise Frobenius powers.
Hence, for every $i\geq0$,
\begin{equation}\label{eq:ree-Fi-phi}
        \phi(F_i)
        =\frac{\langle J\alpha,J\alpha^{q^i}\rangle}
               {w_8^{q^i+1}}
        =\frac{F_i}{w_8^{q^i+1}}.
\end{equation}
By Proposition~\ref{prop:ree-bilinear-identity} and
Lemma~\ref{lem:ree-A-phi},
\[
\begin{aligned}
B_R\left(\frac{\vartheta}{\Lambda^\mu}\right)
 &=B_R(\phi(\vartheta))
  =\phi\left(\frac{F_4}{F_3}\right)\\
 &=\frac{F_4}{F_3}\,w_8^{q^3-q^4}
  =\frac{B_R(\vartheta)}{w_8^{q^3(q-1)}}
  =\frac{B_R(\vartheta)}{\Lambda^{q^3}}.
\end{aligned}
\]
This proves \eqref{eq:ree-key}.
\end{proof}

\begin{proposition}\label{prop:ree-invariant}
The function $t_R$ is fixed by $\Ree(q)$.
\end{proposition}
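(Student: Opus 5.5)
The plan is to mirror the proof of Proposition~\ref{prop:suzuki-invariant}. Since $\Ree(q)$ is generated by the stabilizer of $P_\infty$ together with the involution $\phi$, it suffices to check invariance of $t_R$ under these generators.

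Invariance under the stabilizer of $P_\infty$ is exactly Lemma~\ref{lem:ree-stabilizer}. It therefore remains to prove $\phi(t_R)=t_R$.

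Since $\phi$ is a field automorphism and $t_R$ is a rational expression in $\vartheta$ with coefficients in $\F_3$, we have
\[
        \phi(t_R)=\frac{B_R(\phi(\vartheta))^\mu}{\phi(\vartheta)^{q^3}}.
\]
By Lemma~\ref{lem:ree-A-phi}, $\phi(\vartheta)=\vartheta/\Lambda^\mu$. By Lemma~\ref{lem:ree-key}, $B_R(\vartheta/\Lambda^\mu)=B_R(\vartheta)/\Lambda^{q^3}$. Substituting both gives
\[
        \phi(t_R)=\frac{\bigl(B_R(\vartheta)/\Lambda^{q^3}\bigr)^\mu}{\bigl(\vartheta/\Lambda^{\mu}\bigr)^{q^3}}
        =\frac{B_R(\vartheta)^\mu\,\Lambda^{-\mu q^3}}{\vartheta^{q^3}\,\Lambda^{-\mu q^3}}=t_R.
\]
The powers of $\Lambda$ cancel because both equal $\Lambda^{\mu q^3}$.

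There is essentially no obstacle left: the substantive work is in Proposition~\ref{prop:ree-bilinear-identity} and Lemma~\ref{lem:ree-key}. The only point to note is well-definedness, namely that $\vartheta\neq0$ and $B_R(\vartheta)\neq0$ in $\K(\cR_q)$, so that $t_R$ and its image make sense. The first holds because $A=x^q-x$ is nonconstant. The second follows from $B_R(\vartheta)=F_4/F_3$ with $F_4=A^{E_6}B_R(\vartheta)$, together with the fact that $B_R(\vartheta)$ has a pole at $P_\infty$ of order $E_6\cdot(-\ord_{P_\infty}\vartheta)>0$, coming from its unique top-degree term.
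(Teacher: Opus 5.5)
Your proposal is correct and matches the paper's proof: invariance under the stabilizer of $P_\infty$ comes from Lemma~\ref{lem:ree-stabilizer}, and $\phi(t_R)=t_R$ follows by substituting Lemma~\ref{lem:ree-A-phi} and Lemma~\ref{lem:ree-key} and cancelling the factors $\Lambda^{\mu q^3}$. Your extra well-definedness remark is sound, though the paper does not state it; $B_R(\vartheta)\neq0$ also follows simply because $B_R$ is a nonzero polynomial and $\vartheta$ is transcendental over $\K$.
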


\begin{proof}
By Lemma~\ref{lem:ree-stabilizer}, $t_R$ is fixed by the stabilizer of
$P_\infty$.  Since $\Ree(q)$ is generated by this stabilizer and $\phi$, it
remains to prove that $\phi(t_R)=t_R$.  Lemmas~\ref{lem:ree-A-phi} and
\ref{lem:ree-key} give
\[
        \phi(\vartheta)=\frac{\vartheta}{\Lambda^\mu},
        \qquad
        B_R(\phi(\vartheta))
        =\frac{B_R(\vartheta)}{\Lambda^{q^3}}.
\]
Therefore
\[
\begin{aligned}
\phi(t_R)
 &=\frac{B_R(\phi(\vartheta))^\mu}{\phi(\vartheta)^{q^3}}\\
 &=\frac{(B_R(\vartheta)/\Lambda^{q^3})^\mu}
         {(\vartheta/\Lambda^\mu)^{q^3}}
  =\frac{B_R(\vartheta)^\mu}{\vartheta^{q^3}}
  =t_R.
\end{aligned}
\]
Thus $t_R$ is fixed by $\Ree(q)$.
\end{proof}

\begin{theorem}\label{thm:ree-invariant}
One has
\[
        \K(\cR_q)^{\Ree(q)}=\K(t_R).
\]
\end{theorem}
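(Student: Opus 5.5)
The proof will mirror that of Theorem~\ref{thm:suzuki-invariant}: combine Proposition~\ref{prop:ree-invariant} with a degree count and Artin's theorem. By Proposition~\ref{prop:ree-invariant}, $t_R\in\K(\cR_q)^{\Ree(q)}$, so $\K(t_R)\subseteq\K(\cR_q)^{\Ree(q)}$. Artin's theorem \cite[Theorem~3.4.1]{Stichtenoth} gives $[\K(\cR_q):\K(\cR_q)^{\Ree(q)}]=|\Ree(q)|=q^3(q-1)(q^3+1)$. It therefore suffices to show that $[\K(\cR_q):\K(t_R)]=\deg t_R$ equals $|\Ree(q)|$. Comparing degrees in the tower $\K(t_R)\subseteq\K(\cR_q)^{\Ree(q)}\subseteq\K(\cR_q)$ then forces equality of the two fields.

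For the degree, write $t_R=R_R(\vartheta)$ with
\[
R_R(T)=\frac{B_R(T)^\mu}{T^{q^3}}.
\]
The polynomial $B_R$ has degree $E_6$ and constant term $B_R(0)=1$, so the numerator and denominator are coprime. The numerator has degree $\mu E_6=q^3+1$ by the identity recorded after \eqref{eq:ree-E}, and the denominator has degree $q^3$. Hence $\deg R_R=\max(q^3+1,q^3)=q^3+1$. Since the degree of a composition of a rational function with a nonconstant function is multiplicative, $\deg t_R=(q^3+1)\deg\vartheta$.

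It remains to compute $\deg\vartheta=(q-1)\deg A$, where $A=x^q-x$. By \eqref{eq:ree-pole-orders}, $x$ has pole divisor $q^2P_\infty$, so $A$ has pole divisor $q^3P_\infty$ and $\deg A=q^3$. As a consistency check, the zeros of $A$ are the affine places over $x\in\F_q$. Over each such value the equations \eqref{eq:ree-curve} become $y^q-y=0$ and $z^q-z=0$, giving $q^2$ places each, so $A$ has $q^3$ simple zeros, matching the $q^3$ affine $\F_q$-rational points. Thus $\deg\vartheta=q^3(q-1)$ and
\[
\deg t_R=(q^3+1)q^3(q-1)=|\Ree(q)|.
\]

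There is no serious obstacle. The one point needing care is that $\deg(R\circ\vartheta)=\deg R\cdot\deg\vartheta$ requires $R_R$ to be in lowest terms, which is exactly what $B_R(0)=1$ guarantees. The identity $\mu E_6=q^3+1$ is already established in the preliminaries.
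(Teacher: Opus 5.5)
Your proposal is correct and follows essentially the same route as the paper: invariance from Proposition~\ref{prop:ree-invariant}, then $\deg R_R=q^3+1$ via $B_R(0)=1$ and $\mu E_6=q^3+1$, then $\deg\vartheta=q^3(q-1)$ from the pole divisor $q^3P_\infty$ of $x^q-x$, and finally Artin's theorem to conclude by comparing degrees. Your count of the $q^3$ simple affine zeros of $A$ is a harmless consistency check that the paper also records.
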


\begin{proof}
By Proposition~\ref{prop:ree-invariant},
$t_R\in\K(\cR_q)^{\Ree(q)}$.  Let
\[
        R_R(T)=\frac{B_R(T)^\mu}{T^{q^3}}.
\]
By \eqref{eq:ree-B}, $\deg B_R=E_6$, and
$\mu E_6=q^3+1$.  Therefore the numerator $B_R(T)^\mu$ has degree
$q^3+1$, while the denominator has degree $q^3$.  Since $B_R(0)=1$,
the numerator and denominator are coprime, and hence
\[
        \deg R_R=q^3+1.
\]
The function $x^q-x$ has pole divisor $q^3P_\infty$ and has $q^3$
simple affine zeros, namely the points of
$\cR_q(\F_q)\setminus\{P_\infty\}$.  Hence
\[
        \deg\vartheta=q^3(q-1).
\]
Thus
\[
        \deg(t_R)=\deg(R_R\circ\vartheta)
        =(q^3+1)q^3(q-1)=|\Ree(q)|.
\]
Therefore
\[
        [\K(\cR_q):\K(t_R)]=|\Ree(q)|.
\]
Artin's theorem \cite[Theorem~3.4.1]{Stichtenoth} gives
\[
        [\K(\cR_q):\K(\cR_q)^{\Ree(q)}]=|\Ree(q)|.
\]
Since $\K(t_R)\subseteq\K(\cR_q)^{\Ree(q)}$, the two fields are equal.
\end{proof}

We close with an application of the two invariants to the construction of
covers on which the full automorphism group still acts.

\begin{remark}\label{rem:prank}
Let $F$ be either $\K(\cS_q)$ or $\K(\cR_q)$, let $G=\Aut(F)$ be the
corresponding full automorphism group, and let $t$ be the invariant $t_S$ or
$t_R$, so that $F^G=\K(t)$ by Theorem~\ref{thm:suzuki-invariant} or
Theorem~\ref{thm:ree-invariant}.  An explicit generator of $F^G$ gives a
general way of writing extensions of $F$ which retain the action of $G$.
Indeed, for $R(T)\in\K(T)$ the extensions defined by
\[
        u^n=R(t),\qquad p\nmid n,\quad n>1,
        \qquad\text{or}\qquad
        u^p-u=R(t),
\]
allow every element of $G$ to lift by fixing $u$, provided the defining
polynomial is irreducible over $F$.  The lifted group commutes with the cyclic
covering group, so the automorphism group of the extension contains
$G\times C_n$ or $G\times C_p$, respectively. 

The construction also produces explicit covers of positive $p$-rank.  Choose
$a\in\K$ outside the branch locus of the quotient map defined by $t$, and put
\[
        E=F(u),\qquad u^p-u=\frac{1}{t-a}.
\]
The right-hand side has simple poles at the $|G|$ points above $a$.  Hence the
defining polynomial is irreducible over $F$, and $E/F$ is ramified precisely at
these points, each of which is totally ramified.  Both $\cS_q$ and $\cR_q$ are
maximal, hence supersingular, and therefore have $p$-rank zero; the
Deuring--Shafarevich formula (see \cite{HKT}) then gives
\[
        \gamma(E)-1=p(0-1)+|G|(p-1),
        \qquad\text{that is,}\qquad
        \gamma(E)=(p-1)(|G|-1)>0,
\]
where $\gamma$ denotes the $p$-rank.  Allowing simple poles above several
distinct unramified fibres gives covers of unbounded genus and $p$-rank which
still carry the action of $G$.  These positive-$p$-rank covers are of course
not maximal curves; they provide a separate application of the explicit
generators determined here to the construction of curves with Suzuki or Ree
symmetries.  For the extension displayed above, the exhibited subgroup of
$\Aut(E)$ has order
\[
        p|G|=\frac{p}{p-1}\,\gamma(E)+p,
\]
so its order grows only linearly with the $p$-rank.  It would be interesting to
look for invariant covers in which the $p$-rank grows more slowly relative to
the lifted group, and to compare them, when the relevant hypotheses hold, with
the quartic $p$-rank bound of \cite{GKTPRank}.
\end{remark}

\section*{Acknowledgements}
The authors thank the Italian National Group for Algebraic and Geometric Structures and their Applications (GNSAGA—INdAM)
which supported the research.

\end{document}